\documentclass[12pt]{amsart}

\usepackage{fullpage}
\usepackage{amsmath}
\usepackage{amssymb}
\usepackage{amsthm}
\usepackage{amstext}
\usepackage{appendix}
\usepackage{perpage}
\usepackage{color}
\usepackage{tikz-cd}
\usepackage[colorlinks=true,linkcolor=blue, citecolor=blue]{hyperref}

\newtheorem{thm}{Theorem}[section]
\newtheorem{lemm}[thm]{Lemma}

\theoremstyle{definition}

\newtheorem{defi}[thm]{Definition}

\begin{document}

\title{On the inner products of some Deligne--Lusztig type representations}

\author{Zhe Chen}

\address{Department of Mathematics, Shantou University, Shantou, China}

\email{zhechencz@gmail.com}

\begin{abstract}
In this paper we introduce a family of Deligne--Lusztig type varieties attached to connected reductive groups over quotients of discrete valuation rings, naturally generalising the higher Deligne--Lusztig varieties and some constructions related to the algebraisation problem raised by Lusztig. We establish the inner product formula between the representations associated to these varieties and the higher Deligne--Lusztig representations.
\end{abstract}

\maketitle

\tableofcontents

\section{Introduction}\label{sec:Intro}

Let $\mathcal{O}$ be a complete discrete valuation ring with a uniformiser $\pi$ and with a finite residue field $\mathcal{O}/\pi=\mathbb{F}_q$. Since $\mathcal{O}$ is a profinite ring, the study of smooth representations of reductive groups over $\mathcal{O}$ leads to the study of representations of the groups over $\mathcal{O}/\pi^r$, for all $r\in\mathbb{Z}_{>0}$; in this paper, we will be concerned with these latter objects.

\vspace{2mm} Let $\mathbb{G}$ be a connected reductive group scheme over $\mathcal{O}_r:=\mathcal{O}/\pi^r$. When $r=1$, $\mathbb{G}(\mathcal{O}_1)=\mathbb{G}(\mathbb{F}_q)$ is a finite group of Lie type; Deligne and Lusztig \cite{DL1976} founded a geometric approach to its representations. In the Corvallis paper \cite{Lusztig1979SomeRemarks}, Lusztig proposed a generalisation of this geometric theory for $\mathbb{G}(\mathcal{O}_r)$, for any positive integer $r\geq1$. The proofs in the positive characteristic case was presented by Lusztig himself in \cite{Lusztig2004RepsFinRings}, which was later generalised by Stasinski for the general case in \cite{Sta2009Unramified}, by the use of the Greenberg functor. When the involved parameters satisfying some regularity conditions, these representations are irreducible; meanwhile, for $r\geq2$, following Shintani \cite{Shintani1968sqr_int_irrep_lin_gr}, G\'erardin \cite{Gerardin1975SeriesDiscretes} found a purely algebraic method to construct some irreducible representations of these groups.

\vspace{2mm} It is a very interesting question that whether the geometrically constructed representations of Lusztig coincides with the algebraically constructed representations of G\'erardin. At even levels (i.e.\ $r$ is even), this problem was investigated recently, and was proved to be true, as expected by Lusztig; for $\mathbb{G}=\mathrm{GL}_n$ this is in \cite{ZheChen_PhDthesis} and for a general $\mathbb{G}$ this is in \cite{ChenStasinski_2016_algebraisation}.

\vspace{2mm} In this paper, we introduce a family of varieties $L^{-1}(FU^{r-b,b})$, labelled by $b\in [0,r]\cap\mathbb{Z}$, such that the alternating sum (see Definition~\ref{defi: DL rep at various pages})
\begin{equation}\label{key object}
R_{T,U,b}^{\theta}:=\sum_{i\in\mathbb{Z}}(-1)^{i}H^{i}_c(L^{-1}(FU^{r-b,b}),\overline{\mathbb{Q}}_{\ell})_{\theta}
\end{equation}
is a virtual representation of $\mathbb{G}(\mathcal{O}_r)$, where the parameter $\theta$ is a character of certain finite abelian group $T^F$ (here $\ell$ is a prime not equal to $\mathrm{char}(\mathbb{F}_q)$). When $b=r$, this coincides with the higher Deligne--Lusztig representation $R_{T,U}^{\theta}$ introduced in \cite{Lusztig1979SomeRemarks}; when $r$ is even and $b=r/2$, this coincides with G\'{e}rardin's representation introduced in \cite{Gerardin1975SeriesDiscretes}. 

\vspace{2mm} Our aim is to compute the inner product of the higher Deligne--Lusztig representation $R_{T,U}^{\theta}$ and the general $R_{T,U,b}^{\theta}$: In particular, we will show that
\begin{equation}\label{main result}
\langle R_{T,U,b}^{\theta}, R_{T,U}^{\theta} \rangle_{\mathbb{G}(\mathcal{O}_r)}=1,
\end{equation}
when $\theta$ is regular and in general position (see Theorem~\ref{main theorem}).

\vspace{2mm} Computing inner products of this type is one of the core steps in many situations in Deligne--Lusztig theory. Here the general principle is that one should first transfer the inner product of this type into an (equivariant) Euler characteristic of certain variety (see Lemma~\ref{lemma: transfer into Euler characteristic}), then partition the variety into small pieces, which reduces the problem of computing the Euler characteristic of the variety to computing that of each small pieces; however, to make this argument practically work, one usually needs more sophisticated constructions to deal with different specific situations.

\vspace{2mm} Note that, in the case $b=r$, \eqref{main result} implies the irreducibility of the higher Deligne--Lusztig representations, and was proved in \cite{Lusztig2004RepsFinRings} with $\mathrm{char}(\mathcal{O})>0$ and in \cite{Sta2009Unramified} in general. 

\vspace{2mm} Also note that, in the case $r$ is even and $b=r/2$, \eqref{main result}  is the main step in the algebraisation of higher Deligne--Lusztig representations at even levels, and was proved in \cite{ZheChen_PhDthesis} with $\mathbb{G}=\mathrm{GL}_n$ and in \cite{ChenStasinski_2016_algebraisation} in general.

\vspace{2mm} However, the methods in the $b=r$ case and the $b=r/2$ case faced some obstructions in the general case; the reasons are roughly that the variety $U^{r-b,b}$ is in general not the Greenberg functor image of the unipotent radical of a Borel subgroup (while $U^{0,r}=U$ is so), and in general not stable under the Frobenius action or the Weyl group conjugation (while $U^{r/2,r/2}$ is so). In this paper we will overcome these obstructions, thus complete \eqref{main result} for any $r$ and $b$. In the special case that $r$ is odd and $b=(r+1)/2$, this result is expected to be useful in the algebraisation of higher Deligne--Lusztig representations at odd levels.

\vspace{2mm} For $x,y$ in an algebraic group, we will use the conjugation notation $x^y=y^{-1}xy={^{y^{-1}}x}$. For the alternating sum of $\ell$-adic cohomology groups $\sum_i(-1)^iH^i_c(-,\overline{\mathbb{Q}}_{\ell})$, we will write $H^*_c(-,\overline{\mathbb{Q}}_{\ell})$ for short. If an involved Frobenius $F$ is applied to some object $*$, then we occasionally drop the ``$()$'' in $F(*)$ and write $F*$.

\vspace{2mm}\noindent {\bf Acknowledgement.} The author thanks Alexander Stasinski for helpful discussions. During the preparation of this work, the author is partially supported by the STU Scientific Research Foundation for Talents NTF17021.

\section{Deligne--Lusztig varieties at various pages}\label{section: DL at various pages}

Fix an arbitrary $r\in\mathbb{Z}_{>0}$. Let $\mathcal{O}^{\mathrm{ur}}$ be the ring of integers in the maximal unramified extension of the fraction field of $\mathcal{O}$, and put $\mathcal{O}^{\mathrm{ur}}_r:=\mathcal{O}^{\mathrm{ur}}/\pi^r$. For a smooth affine group scheme $\mathbf{H}$ over $\mathcal{O}^{\mathrm{ur}}_r$, according to the Greenberg functor $\mathcal{F}$ introduced in \cite{Greenberg19611} and \cite{Greenberg19632}, there is an associated affine algebraic group $H=\mathcal{F}\mathbf{H}$ over $k:=\overline{\mathbb{F}}_q$ satisfying several nice properties. In \cite{Sta2009Unramified}, this Greenberg functor technique was used to generalise the constructions and results in \cite{Lusztig2004RepsFinRings}, from the positive characteristic case to the general case. Detailed modern treatments of the Greenberg functors can be found in \cite{Sta2012ReductiveGr} and \cite{Berthapelle_Gonzalez_GreenbergRevisited}.

\vspace{2mm} Let $\mathbb{G}$ be a connected reductive group over $\mathcal{O}_r$ (i.e.\ a smooth affine group scheme over $\mathrm{Spec}(\mathcal{O}_r)$, whose geometric fibres are connected reductive algebraic groups in the usual sense; this is the definition used in \cite[XIX 2.7]{SGA3}), and let $\mathbf{G}$ be the base change of $\mathbb{G}$ to $\mathcal{O}^{\mathrm{ur}}_r$. In general, a surjective algebraic group endomorphism on  $G=G_r=\mathcal{F}\mathbf{G}$ with finitely many fixed points is called a Frobenius endomorphism. In this paper we only focus on the following typical situation: The Frobenius element in $\mathrm{Gal}(k/\mathbb{F}_q)$ extends to an automorphism of $\mathcal{O}^{\mathrm{ur}}_r$, then by the Greenberg functor it gives a rational structure of $G$ over $\mathbb{F}_q$, such that $\mathbf{G}(\mathcal{O}^{\mathrm{ur}}_r)\cong G(k)$ and $\mathbb{G}(\mathcal{O}_r)\cong G^F$ as abstract groups, where $F$ denotes the associated geometric Frobenius endomorphism (see the terminology in \cite[Chapter 3]{DM1991}); this allows us to use the geometry of $G$ to study the representations of $\mathbb{G}(\mathcal{O}_r)$. Let $L\colon g\mapsto g^{-1}F(g)$ be the Lang endomorphism on $G$.

\vspace{2mm} To define our fundamental objects \eqref{key object}, we recall some notation used in \cite{Sta2009Unramified} and \cite{ChenStasinski_2016_algebraisation}: For any $i\in[1,r]\cap\mathbb{Z}$, the modulo $\pi^{i}$ reduction map gives a surjective algebraic group endomorphism $G_r\rightarrow G_i$. We denote the kernel subgroup by $G^{i}=G_r^{i}$, and put $G^0:=G$ (do not mix it with the identity component $G^{\circ}$); similar notation applies to the closed subgroups of $G$. Given a closed subgroup $H$ of $G$, we call it $F$-stable (or $F$-rational, or simply rational) if $F(H)\subseteq H$. Let $\mathbf{T}$ be a maximal torus of $\mathbf{G}$ such that $T=\mathcal{F}\mathbf{T}$ is $F$-stable, let $\mathbf{B}$ be a Borel subgroup containing $\mathbf{T}$, and let $\mathbf{U}$ (resp.\ $\mathbf{U}^-$) be the unipotent radical of $\mathbf{B}$ (resp.\ the opposite of $\mathbf{B}$). Denote the associated algebraic groups by $B=\mathcal{F}\mathbf{B}$, $U=\mathcal{F}\mathbf{U}$, and $U^-=\mathcal{F}\mathbf{U}^{-}$, respectively.

\vspace{2mm} For any $b\in[0,r]\cap\mathbb{Z}$, consider the unipotent algebraic group $U^{r-b,b}:=U^{r-b}(U^-)^{b}$. Note that, when $r$ is even and $b=r/2$, this is a commutative group, and in this case it is denoted by $U^{\pm}$ in \cite{ChenStasinski_2016_algebraisation}.

\begin{defi}\label{defi: DL var at various pages}
For $b\in [0,r]\cap\mathbb{Z}$, we call $L^{-1}(FU^{r-b,b})\subseteq G$ a Deligne--Lusztig variety at page $b$.
\end{defi}

Note that $G^F$ acts on $L^{-1}(FU^{r-b,b})$ by the left multiplication, and $T^F$ acts on $L^{-1}(FU^{r-b,b})$ by the right multiplication, so, after fixing an arbitrary rational prime $\ell\nmid q$, we obtain the following construction.

\begin{defi}\label{defi: DL rep at various pages}
For $b\in [0,r]\cap\mathbb{Z}$ and $\theta\in\widehat{T^F}:=\mathrm{Hom}(T^F,\overline{\mathbb{Q}}_{\ell})$, we call the virtual $G^F$-representation 
\begin{equation*}
R_{T,U,b}^{\theta}:=H^*_c(L^{-1}(FU^{r-b,b}),\overline{\mathbb{Q}}_{\ell})_{\theta}
\end{equation*} 
a Deligne--Lusztig representation at page $b$.
\end{defi}

This generalises the constructions studied previously in \cite{Lusztig1979SomeRemarks}, \cite{Lusztig2004RepsFinRings}, \cite{Sta2009Unramified}, \cite{ZheChen_PhDthesis}, and \cite{ChenStasinski_2016_algebraisation}. In those works one central theme is the inner product formula; in this paper we continue this theme by completing the formula between the higher Deligne--Lusztig representation $R_{T,U}^{\theta}=R_{T,U,0}^{\theta}$ and the general $R_{T,U,b}^{\theta}$, for any $b$. To state our main result, we need to recall the notion of regularity:

\vspace{2mm} Let $\Phi=\Phi(\mathbf{G},\mathbf{T})$ be the set of roots of $\mathbf{T}$, let $\Phi^+\subseteq \Phi$ be the subset of positive roots with respect to $\mathbf{B}$, and let $\Phi^-:=\Phi\backslash\Phi^+$ be the corresponding subset of negative roots. Given a root $\alpha\in\Phi$, we denote by $\mathbf{T}^{\alpha}$ the image of the coroot $\check{\alpha}$, and write $T^{\alpha}:=\mathcal{F}\mathbf{T}^{\alpha}$; similarly, we write $\mathbf{U}_{\alpha}\subseteq\mathbf{U}$ for the root subgroup of $\alpha$, and write $U_{\alpha}$ for the Greenberg functor image. Put $\mathcal{T}^{\alpha}:=(T^{\alpha})^{r-1}$.

\begin{defi}
Let $a$ be a positive integer such that $\mathcal{T}^{\alpha}$ is $F^{a}$-stable for all $\alpha\in\Phi$. Consider the norm map $N^{F^{a}}_F(t):=t\cdot F(t)\cdots F^{a-1}(t)$ on $T^{F^{a}}$. A character $\theta\in\widehat{T^F}$ is called regular, if it is non-trivial on $N^{F^{a}}_F(\mathcal{T}^{\alpha})$ for every $\alpha\in\Phi$.
\end{defi}

The notion of regularity is indeed independent of the choice of the integer $a$; see \cite[1.5]{Lusztig2004RepsFinRings} and \cite[2.8]{Sta2009Unramified}.

\vspace{2mm} Since $\mathcal{O}^{\mathrm{ur}}_r$ is a strictly henselian local ring, by \cite[2.1]{Sta2009Unramified} and \cite[XXII 3.4]{SGA3} we see that $W(T):=N_G(T)/T\cong W(T_1):=N_{G_1}(T_1)/T_1$. Now we can state the formula:
\begin{thm}\label{main theorem}
Let $b\in[0,r]\cap \mathbb{Z}$. If $\theta\in \widehat{T^F}$ is regular, then 
\begin{equation*}
\langle R_{T,U,b}^{\theta},R_{T,U}^{\theta} \rangle_{\mathbb{G}(\mathcal{O}_r)}=\#\mathrm{Stab}_{W(T)^F}(\theta).
\end{equation*}
In particular, if $\theta$ is moreover in general position, then $\langle R_{T,U,b}^{\theta},R_{T,U}^{\theta} \rangle_{\mathbb{G}(\mathcal{O}_r)}=1$.
\end{thm}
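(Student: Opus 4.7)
The strategy follows the two-step paradigm of Deligne--Lusztig theory, adapted to overcome the asymmetry coming from $U^{r-b,b}$ being neither $F$-stable nor $W$-stable. The first step is to invoke Lemma~\ref{lemma: transfer into Euler characteristic} to transform the inner product into a character-weighted sum of Euler characteristics:
\begin{equation*}
\langle R_{T,U,b}^{\theta},R_{T,U}^{\theta}\rangle_{G^F}=\frac{1}{|T^F|^{2}}\sum_{(s,t)\in T^F\times T^F}\theta(s)\theta(t)^{-1}\cdot\chi\bigl(\Sigma^{(s,t)}\bigr),
\end{equation*}
where $\Sigma$ carries commuting $T^F\times T^F$ actions and arises as the $G^F$-quotient of $L^{-1}(FU^{r-b,b})\times L^{-1}(FU)$. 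Via $(x,y)\mapsto x^{-1}Fy$ (or a close variant) one identifies $\Sigma$ with a concrete subvariety of $G$ cut out by a condition of the shape $L(g)\in F(U^{r-b,b})^{-1}\cdot FU$, on which $(s,t)$ acts by $g\mapsto s^{-1}gt$.

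The second step is to stratify $\Sigma$ by a Bruhat-type decomposition inherited from $G_1$. Using $W(T)\cong W(T_1)$, one writes $G_1=\bigsqcup_{w\in W(T)}U_1\dot w T_1U_1$ and lifts each cell to level $r$ via the congruence filtration on root subgroups, yielding a partition $\Sigma=\bigsqcup_{w}\Sigma_w$. For each $w$ I would analyze the fixed locus $\Sigma_w^{(s,t)}$ by projecting onto the maximal-torus coordinates; the fibres should be iterated extensions of affine spaces built from root-group variables, assembled using the Greenberg-level commutator relations $[U_\alpha^{i},U_\beta^{j}]\subseteq U_{\alpha+\beta}^{i+j}$. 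Regularity of $\theta$ then forces the contributing strata to those $w$ with $t={}^{\dot w}s$ (up to a subgroup absorbed by the fibration), and the character sum collapses to
\begin{equation*}
\sum_{w\in W(T)^F}\frac{1}{|T^F|}\sum_{s\in T^F}\theta(s)\bigl({}^{\dot w}\theta\bigr)(s)^{-1}=\#\mathrm{Stab}_{W(T)^F}(\theta),
\end{equation*}
by the orthogonality of $T^F$-characters.

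\textbf{Main obstacle.} The principal difficulty, already flagged in the introduction, is that $U^{r-b,b}=U^{r-b}(U^-)^{b}$ mixes pieces of $U$ and $U^-$ in a way that is neither $F$-stable nor $W$-conjugation-stable for generic $b$, so the fibration arguments that work cleanly in the $b=r$ case and the Weyl-symmetry arguments that work cleanly in the $b=r/2$ case both fail outright. I expect to circumvent this by splitting the $U^{r-b,b}$-factor into its $U^{r-b}$-half and its $(U^-)^{b}$-half, stratifying each against the Bruhat pieces separately, and then re-interlacing them via a change of variables that propagates the twisted $F$-action through each half using the commutator identities above. The delicate point will be showing that the ``error'' strata produced by the failure of $F$- and $W$-equivariance of $U^{r-b,b}$ either vanish geometrically or contribute zero upon integration against a regular $\theta$; this should absorb most of the technical work.
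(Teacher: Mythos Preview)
Your high-level plan matches the paper's: apply Lemma~\ref{lemma: transfer into Euler characteristic} and K\"unneth to convert the inner product into a $(T^F\times T^F)$-equivariant Euler characteristic of a variety $\Sigma$, stratify $\Sigma$ by the Bruhat decomposition of $G_1$ lifted to level $r$, and show that only the strata with $w\in\mathrm{Stab}_{W(T)^F}(\theta)$ survive. Where your proposal diverges, and where there is a genuine gap, is in the mechanism you sketch for killing the error strata.

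You propose to ``split the $U^{r-b,b}$-factor into its $U^{r-b}$-half and its $(U^{-})^{b}$-half'' and to analyze fixed loci $\Sigma_w^{(s,t)}$ as iterated affine fibrations. The paper does neither of these things, and for good reason: the fibres over the Bruhat cells are \emph{not} in general affine spaces once $b\notin\{r/2,r\}$, so a naive fibration argument does not produce vanishing. Instead, the paper further stratifies each $\widehat{\Sigma}_v$ according to which root $\beta\in\Phi^-$ gives the \emph{first} nontrivial factor $x^{F(z)}_{F(\beta)}$ in the ordered product expansion of $F(z)$ (where $z=(u'u^-)^{\hat{v}}$), and according to the congruence level $i$ at which $z$ lives. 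This yields pieces $\Sigma'_{v,\beta,i}$, $\Sigma''_{v,\beta,i}$, and a central piece $\Sigma_{v,c}$; the proof then splits $\Sigma'_{v,\beta,i}$ further into the three regimes $i\geq b$, $i<r-b$, and $r-b\leq i<b$, which is exactly where the asymmetry of $U^{r-b,b}$ is absorbed.

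The key technical device---absent from your proposal---is not a fixed-point count but a \emph{homotopy}: on each non-central stratum one constructs by hand an explicit family of automorphisms $h_t$, parametrized by $t$ in a closed subgroup $H_\beta\subseteq T^{r-1}$ containing $(T^{r-1})^F$, which agrees with the $(T^{r-1})^F$-action on rational points. This relies on a delicate commutator computation (the paper's Lemma~3.5, generalizing \cite[Lemma~4.6]{ChenStasinski_2016_algebraisation}) producing a section $\Psi^{-F(\beta)}_{F(z)}\colon\mathcal{T}^{-F(\beta)}\to U^{r-i-1}_{-F(\beta)}$; the maps $f_t$, $g_t$ built from $\Psi$ are what make $h_t$ land back in $FU^{r-b,b}\times FU$, and verifying this is precisely where the case distinction on $i$ versus $b$ and $r-b$ enters. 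Since $(H_\beta)^{\circ}$ is connected and contains the image of a norm map $N^{F^a}_F$ on $\mathcal{T}^{-F(\beta)}$, it acts trivially on cohomology, and then regularity of $\theta$ (non-triviality on such norm images) forces the isotypic part to vanish. Your plan as written does not get you to this construction; without it, the ``error strata'' will not visibly vanish.
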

The remaining part of this paper devotes to its proof.

\section{Proof of Theorem~\ref{main theorem}}\label{section: proof}

We start with the following general lemma which allows us to transform the inner product into a Euler characteristic.

\begin{lemm}\label{lemma: transfer into Euler characteristic}
Suppose that we are given a connected algebraic group over ${\mathbb{F}}_q$; denote by $\mathcal{G}$ its base change to $\overline{\mathbb{F}}_q$. Let $\sigma$ be the associated geometric Frobenius endomorphism, and let $L_{\sigma}$ be the associated Lang morphism. If $X$ and $X'$ are two closed subvarieties of $\mathcal{G}$, then the morphism
\begin{equation*}
\kappa\colon\mathcal{G}^{\sigma} \backslash (L^{-1}(X)\times L^{-1}(X'))\longrightarrow \{ (x,x',y)\in X\times X'\times\mathcal{G} \mid  x\sigma(y)=yx'  \},
\end{equation*}
given by $(g,g')\mapsto (L_{\sigma}(g), L_{\sigma}(g'), g^{-1}g')$, is an isomorphism. (Indeed, in the proof of our main result, only the bijectivity part of this morphism is needed.)
\end{lemm}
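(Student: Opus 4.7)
The plan is to verify that $\kappa$ is well-defined on the quotient, lands in the target set, and then to establish bijectivity; the scheme-theoretic isomorphism will follow by recognising the map out of $L^{-1}(X)\times L^{-1}(X')$ as a $\mathcal{G}^\sigma$-torsor over the target. The only substantive input is Lang's theorem for the connected group $\mathcal{G}$.

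First I would verify well-definedness. Under the diagonal left action $h\cdot(g,g')=(hg,hg')$ with $h\in\mathcal{G}^\sigma$, one checks $L_\sigma(hg)=g^{-1}h^{-1}\sigma(h)\sigma(g)=L_\sigma(g)$, and likewise for $g'$, while $g^{-1}g'$ is manifestly unchanged; so all three coordinates of $\kappa$ descend to the quotient. To see the image lies in the target set, write $x=L_\sigma(g)$, $x'=L_\sigma(g')$, $y=g^{-1}g'$ and compute
\begin{equation*}
x\sigma(y)=g^{-1}\sigma(g)\cdot\sigma(g^{-1}g')=g^{-1}\sigma(g')=g^{-1}g'\cdot g'^{-1}\sigma(g')=yx'.
\end{equation*}

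Next, for surjectivity, take $(x,x',y)$ in the target. By Lang's theorem applied to the connected group $\mathcal{G}$, the map $L_\sigma$ is surjective, so we may choose $g$ with $L_\sigma(g)=x$; setting $g':=gy$ gives $L_\sigma(g')=y^{-1}g^{-1}\sigma(g)\sigma(y)=y^{-1}x\sigma(y)=y^{-1}\cdot yx'=x'$, whence $(g,g')\in L^{-1}(X)\times L^{-1}(X')$ maps to $(x,x',y)$. For injectivity on orbits, suppose $\kappa(g_1,g_1')=\kappa(g_2,g_2')$. The fibres of $L_\sigma$ are $\mathcal{G}^\sigma$-cosets, so $g_2=hg_1$ and $g_2'=h'g_1'$ for some $h,h'\in\mathcal{G}^\sigma$; the equality $g_2^{-1}g_2'=g_1^{-1}g_1'$ then forces $h=h'$, so the two pairs lie in a common $\mathcal{G}^\sigma$-orbit.

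Finally, to upgrade from bijectivity to a scheme-theoretic isomorphism, the computations above show that the composite $\mu\colon L^{-1}(X)\times L^{-1}(X')\to V$ (with $V$ the target) is surjective, has free diagonal $\mathcal{G}^\sigma$-action, and has fibres equal to the orbits; since $\mathcal{G}^\sigma$ is finite, $\mu$ is a Galois $\mathcal{G}^\sigma$-cover and therefore factors as an isomorphism through the geometric quotient. There is no real obstacle; the parenthetical remark in the statement makes clear that only the bijectivity argument above is needed downstream, and the key structural fact driving everything is the surjectivity of $L_\sigma$.
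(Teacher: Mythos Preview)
Your bijectivity argument is correct and complete, and since the parenthetical remark confirms that only bijectivity is used downstream, this already suffices for the application. The difference lies in the final paragraph. Your claim that ``since $\mathcal{G}^\sigma$ is finite, $\mu$ is a Galois $\mathcal{G}^\sigma$-cover'' is not justified as stated: a $\mathcal{G}^\sigma$-invariant surjection whose set-theoretic fibres are free orbits need not be \'etale (the scheme-theoretic fibres could be non-reduced, as with Frobenius), so one cannot directly conclude that the induced map from the geometric quotient to $V$ is an isomorphism rather than merely a bijective morphism.

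The paper circumvents this by enlarging before descending: it replaces $X,X'$ by all of $\mathcal{G}$, so that the target becomes $S=\{(x,x',y)\mid x\sigma(y)=yx'\}$, which is visibly isomorphic to $\mathcal{G}\times\mathcal{G}$ via $(x,x',y)\mapsto(x,y)$ and hence smooth. Separability of the global map $\iota\colon\mathcal{G}\times\mathcal{G}\to S$ is then read off from the fact that its composite with the projection $(x,x',y)\mapsto(x,x')$ is the Lang map $L_\sigma\times L_\sigma$, which is separable. With smoothness of the target and separability in hand, the bijective morphism $\mathcal{G}^\sigma\backslash(\mathcal{G}\times\mathcal{G})\to S$ is an isomorphism by the standard criterion (Borel, \emph{Linear Algebraic Groups}, 6.6), and $\kappa$ is obtained by restricting this isomorphism to the closed subvariety cut out by $X\times X'$. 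Your approach would be completed by inserting exactly this separability check; working directly with possibly singular $X,X'$ makes that harder to phrase, which is why the paper passes to the ambient group first.
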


This can be proved in the same way as the special case in \cite[Page 222]{Carter1993FiGrLieTy}:

\begin{proof}
Consider the variety $S:=\{(x,x',y)\in\mathcal{G}\times \mathcal{G}\times\mathcal{G}\mid x\sigma(y)=yx' \}$, which contains $\{ (x,x',y)\in X\times X'\times\mathcal{G} \mid  x\sigma(y)=yx'  \}$ as a closed subvariety. Note that, in the defining equation of $S$, the component $x'$ is determined by $x$ and $y$; this fact, together with the surjectivity of the Lang map, imply that the following morphism is surjective:
\begin{equation*}
\iota\colon \mathcal{G}\times\mathcal{G}\longrightarrow S,\quad (g,g')\mapsto (L_{\sigma}(g), L_{\sigma}(g'), g^{-1}g').
\end{equation*}
Note that the fibres of $\iota$ are $\mathcal{G}^{\sigma}$-orbits, so it suffices to show that the induced bijective morphism $\bar{\iota}\colon \mathcal{G}^{\sigma}\backslash (\mathcal{G}\times\mathcal{G})\rightarrow S$, extending $\kappa$, is an isomorphism. By \cite[6.6]{Borel_1991_LinearAlgGp} it suffices to show that $S$ is smooth and $\iota$ is separable.

\vspace{2mm} The smoothness of $S$ follows from the fact that $S\cong\mathcal{G}\times\mathcal{G}$ via $(x,x',y)\mapsto (x,y)$.

\vspace{2mm} Meanwhile, consider $\iota'\colon S\rightarrow \mathcal{G}\times\mathcal{G}$ given by $(x,x',y)\mapsto (x,x')$. Then $\iota'\circ\iota$ is a Lang morphism, which is known to be separable, thus $\iota$ is also separable.
\end{proof}

Now we turn to the proof of the theorem itself.

\vspace{2mm} First, when $\theta$ is regular, it is known that $R_{T,U}^{\theta}$ is independent of $U$, so we only need to deal with the case $b\in [r/2,r]\cap\mathbb{Z}$. Moreover, the case $b=r$ and the case $b=r/2$ (for $r$ even) are also known. (See \cite[Corollary 2.4]{Lusztig2004RepsFinRings}, \cite[Corollary 3.4]{Sta2009Unramified}, \cite[Theorem 4.3.2]{ZheChen_PhDthesis}, and \cite[Theorem 4.1]{ChenStasinski_2016_algebraisation}.) Thus, in order to simplify some boundary cases discussions, we will assume $b\in(r/2,r)\cap\mathbb{Z}$ till the end of this paper (note that, this is a non-empty condition only when $r\geq3$).

\vspace{2mm} By the K\"{u}nneth formula and Lemma~\ref{lemma: transfer into Euler characteristic}, we have 
\begin{equation*}
\langle R_{T,U,b}^{\theta},R_{T,U}^{\theta} \rangle_{\mathbb{G}(\mathcal{O}_r)}=\dim \sum_{i} (-1)^iH_c^i(\Sigma,\overline{\mathbb{Q}}_{\ell})_{\theta^{-1},\theta},
\end{equation*}
where
\begin{equation*}
\Sigma:=\{(x,x',y)\in F(U^{r-b,b})\times F(U)\times G\mid xF(y)=yx'  \},
\end{equation*}
on which $T^F\times T^F$ acts by $(t,t')\colon (x,x',y)\mapsto (x^t,(x')^{t'}, t^{-1}yt')$.

\vspace{2mm} We want to compute the cohomology following a general principle of Lusztig, namely, we will first decompose $\Sigma$ into locally closed pieces according to the Bruhat decomposition, and then compute the cohomology of each piece.

\vspace{2mm} Consider the Bruhat decomposition $G_1=\coprod_{v\in W(T)} B_{1}\hat{v}B_{1}$ of $G_1=\mathbf{G}(k)$ (here $\hat{v}$ denotes a lift of $v$ in $G_1$), which lifts to a decomposition $G=\coprod_{v\in W(T)}G_v$, where 
\begin{equation*}
G_v:=(U\cap\hat{v}U^{-}\hat{v}^{-1})(\hat{v}(U^{-})^1\hat{v}^{-1})\hat{v}TU
\end{equation*}
(here we again use $\hat{v}$ to denote a lift of $v$ in $G$); see e.g.\ the proof of \cite[Lemma 2.3]{Sta2009Unramified}. This results a finite partition of $\Sigma$ into locally closed subvarieties
\begin{equation*}
\Sigma=\coprod_{v\in W(T)} \Sigma_{v},
\end{equation*}
where 
\begin{equation*}
\Sigma_{v}:=\{(x,x',y)\in F(U^{r-b,b})\times FU\times G_v\mid xF(y)=yx'\}.
\end{equation*}
For each $v$, consider the variety
\begin{equation*}
\mathcal{Z}_v:=(U\cap \hat{v}U^{-}\hat{v}^{-1})\times 
\hat{v}(U^{-})^{1}\hat{v}^{-1}.
\end{equation*}
Taking the multiplication morphism $\mathcal{Z}_v\rightarrow \hat{v}U^-\hat{v}^{-1}$, we obtain a locally trivial fibration $\widehat{\Sigma}_v\rightarrow\Sigma_{v}$ by an affine space, where
\begin{equation*}
\widehat{\Sigma}_v:=\{(x,x',u',u^{-},\tau,u)\in F(U^{r-b,b})\times FU\times \mathcal{Z}_v \times T\times U\mid xF(u'u^{-}\hat{v}\tau u)=u'u^{-}\hat{v}\tau ux'\},
\end{equation*}
on which $T^F\times T^F$ acts by
\begin{equation*}
(t,t')\colon (x,x',u',u^{-},\tau,u)\longmapsto (x^t,(x')^{t'},(u')^t,(u^{-})^t,(t^{\hat{v}})^{-1}\tau t',u^{t'}).
\end{equation*}
Taking the change of variable $x'F(u)^{-1}\mapsto x'$, we can rewrite $\widehat{\Sigma}_v$ as
\begin{equation*}
\widehat{\Sigma}_v=\{(x,x',u',u^{-},\tau,u)\in F(U^{r-b,b})\times FU\times \mathcal{Z}_v\times T\times U\mid xF(u'u^{-}\hat{v}\tau)=u'u^{-}\hat{v}\tau ux'\},
\end{equation*}
on which the $T^F\times T^F$-action does not change; for our purpose it suffices to compute the dimension of the $\theta^{-1}\times\theta$-isotypical part of $H_c^*(\widehat{\Sigma}_v)$, for every $v$.

\vspace{2mm} We want to stratify each $\widehat{\Sigma}_v$ by stratifying each $\mathcal{Z}_v$. Let us first make the following notation convention. For $\beta\in\Phi^{-}$, let $F(\beta)\in\Phi$ be the root defined by $F(U)_{F(\beta)}=F(U_{\beta})$. Similarly, we can define $F$ on $\Phi^{+}$, thus obtain a bijection on $\Phi=F(\Phi^{-})\sqcup F(\Phi^{+})$ and hence also on $\{U_{\beta}\}_{\beta\in{\Phi}}$. Consider the length function on the roots with respect to $(T,FB)$; we denote by $\mathrm{ht}(-)$ the absolute value of the length function; we fix an arbitrary total order on $F(\Phi^-)$ refining the order given by $\mathrm{ht}(-)$. For $z\in U^{-}$ and $\beta'\in F(\Phi^{-})$, let $x^{F(z)}_{\beta'}\in (FU)_{\beta'}$ be defined by the decomposition 
\begin{equation}\label{formula: product wrt roots}
F(z)=\prod_{\beta\in F(\Phi^{-})}x^{F(z)}_{\beta},
\end{equation}
where the product is taken with respect to the following order: If $\beta_1<\beta_2$, then $x^{F(z)}_{\beta_1}$ is left to $x^{F(z)}_{\beta_2}$.

\vspace{2mm} For $\beta\in\Phi^{-}$, let $\mathcal{Z}_v^{\beta}\subseteq\mathcal{Z}_v$ be the subvariety consisting of the elements $(u',u^{-})$ satisfying that, for $z:=(u'u^{-})^{\hat{v}}\in U^-$, one has $x^{F(z)}_{\beta'}=1$ whenever $\beta'< F(\beta)$, and $x^{F(z)}_{F(\beta)}\neq 1$. This gives a finite stratification into locally closed subvarieties
\begin{equation*}
\mathcal{Z}_v=(\sqcup_{\beta\in\Phi^{-}}\mathcal{Z}_v^{\beta})\sqcup \mathcal{Z}_v^c,
\end{equation*}
where $\mathcal{Z}_v^c:=\mathcal{Z}_v\backslash (\sqcup_{\beta\in\Phi^{-}}\mathcal{Z}_v^{\beta})$. Furthermore, for $i=0,\cdots,r$, let $\mathcal{Z}_v^{\beta}(i)$ be the pre-image of ${^{\hat{v}}(U^{-})}\cap G^i$ (recall that $G^0:=G$) along the multiplication morphism $\mathcal{Z}_{v}^{\beta}\rightarrow {^{\hat{v}}(U^{-})}$; for $i=0,\cdots,r-1$, put $\mathcal{Z}_v^{\beta}(i)^*:=\mathcal{Z}_v^{\beta}(i)\backslash \mathcal{Z}_v^{\beta}(i+1)$. The above stratification can then be refined to be
\begin{equation*}
\mathcal{Z}_v=(\sqcup_{\beta\in\Phi^{-}}\sqcup_{i=0}^{r-1}\mathcal{Z}_v^{\beta}(i)^*)\sqcup \mathcal{Z}_v^c.
\end{equation*}
This naturally corresponds to a stratification of $\widehat{\Sigma}_v$:
\begin{equation*}
\widehat{\Sigma}_v=(\sqcup_{\beta,i}\Sigma_{v,\beta,i})\sqcup\Sigma_{v,c},
\end{equation*}
where
\begin{equation*}
\Sigma_{v,\beta,i}:=\{(x,x',u',u^{-},\tau,u)\in F(U^{r-b,b})\times FU\times \mathcal{Z}^{\beta}_v(i)^*\times T\times U\mid xF(u'u^{-}\hat{v}\tau)=u'u^{-}\hat{v}\tau ux'\}
\end{equation*}
and
\begin{equation*}
\Sigma_{v,c}:=\{(x,x',u',u^{-},\tau,u)\in F(U^{r-b,b})\times FU\times \mathcal{Z}_v^c\times T\times U\mid xF(u'u^{-}\hat{v}\tau)=u'u^{-}\hat{v}\tau ux'\}.
\end{equation*}
Let $\mathcal{Z}_v^{\beta}(i)'\subseteq\mathcal{Z}_v^{\beta}(i)^*$ be the subvariety consisting of the elements $(u',u^-)$ with $u'u^{-}\notin U^{r-b,b}$, and let $\mathcal{Z}_v^{\beta}(i)''\subseteq \mathcal{Z}_v^{\beta}(i)^*$ be the complement to $\mathcal{Z}_v^{\beta}(i)'$. So we have 
\begin{equation*}
\Sigma_{v,\beta,i}=\Sigma_{v,\beta,i}'\sqcup \Sigma_{v,\beta,i}'',
\end{equation*}
where
\begin{equation*}
\Sigma_{v,\beta,i}':=\{(x,x',u',u^{-},\tau,u)\in F(U^{r-b,b})\times FU\times \mathcal{Z}^{\beta}_v(i)'\times T\times U\mid xF(u'u^{-}\hat{v}\tau)=u'u^{-}\hat{v}\tau ux'\}
\end{equation*}
and
\begin{equation*}
\Sigma_{v,\beta,i}'':=\{(x,x',u',u^{-},\tau,u)\in F(U^{r-b,b})\times FU\times \mathcal{Z}^{\beta}_v(i)''\times T\times U\mid xF(u'u^{-}\hat{v}\tau)=u'u^{-}\hat{v}\tau ux'\}.
\end{equation*}
Note that the varieties $\Sigma_{v,\beta,i}'$, $\Sigma_{v,\beta,i}''$, and $\Sigma_{v,c}$ all inherit the $T^F\times T^F$-action in a natural way; we will compute their equivariant Euler characteristics:

\begin{lemm}\label{lemm a} One has
$\dim H^*_c(\Sigma_{v,c})_{\theta^{-1},\theta} =
\begin{cases}
1, & {\text{if\ }} v\in \mathrm{Stab}_{W(T)^F}(\theta)\\
0,  & \text{otherwise}. 
\end{cases}$
\end{lemm}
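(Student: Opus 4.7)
The plan is to split $\Sigma_{v,c}$ as a product $\mathcal{Z}_v^c\times \mathcal{E}_v$, discard the $\mathcal{Z}_v^c$-factor via K\"unneth, and then analyse $\mathcal{E}_v$ using the freeness of a Lang-type torus quotient. First, by definition of $\mathcal{Z}_v^c$ one has $F(z)=1$ for $z=(u'u^-)^{\hat v}$, hence $u'u^-=1$, so $u'$ ranges over the affine space $U\cap \hat v(U^-)^1\hat v^{-1}$. Substituting $u'u^-=1$ into the defining equation of $\Sigma_{v,c}$ collapses it to $xF(\hat v\tau)=\hat v\tau ux'$, independent of $(u',u^-)$. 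Hence $\Sigma_{v,c}\cong\mathcal{Z}_v^c\times\mathcal{E}_v$ as $T^F\times T^F$-varieties, with
\[\mathcal{E}_v:=\bigl\{(x,x',\tau,u)\in F(U^{r-b,b})\times FU\times T\times U \,\big|\, xF(\hat v\tau)=\hat v\tau ux'\bigr\},\]
and with $T^F\times T^F$ acting on $\mathcal{Z}_v^c$ only through the first factor, by conjugation. Since this action extends to the connected torus $T$, it acts trivially on $H^*_c(\mathcal{Z}_v^c)$; by K\"unneth the problem reduces to proving $\dim H^*_c(\mathcal{E}_v)_{\theta^{-1},\theta}=1$ if $v\in\mathrm{Stab}_{W(T)^F}(\theta)$, and $0$ otherwise.

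Next, I analyse the $T^F\times T^F$-action on the $\tau$-coordinate: $(t,t')\cdot\tau=(t^{\hat v})^{-1}\tau t'$, which is right translation by $(t^{\hat v})^{-1}t'$. When $v\in W(T)^F$, $t^{\hat v}\in T^F$, so the map $T^F\times T^F\to T^F$, $(t,t')\mapsto (t^{\hat v})^{-1}t'$, is surjective with kernel $K=\{(t,t^{\hat v}):t\in T^F\}$, and the image acts freely on $\mathcal{E}_v$ via the $\tau$-coordinate. Thus $\mathcal{E}_v\to \mathcal{E}_v/T^F$ is a Galois \'etale cover with group $T^F$, and the $(\theta^{-1},\theta)$-isotypical part of $H^*_c(\mathcal{E}_v)$ equals the $\chi$-isotypical part (under the remaining $K$-action) of $H^*_c(\mathcal{E}_v/T^F,\mathcal{L}_\theta)$, where $\chi=(\theta^{-1},\theta)|_K$ is the character $(t,t^{\hat v})\mapsto \theta(t^{\hat v})\theta(t)^{-1}$. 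This character is trivial precisely when $v\in\mathrm{Stab}_{W(T)^F}(\theta)$.

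The vanishing for $v\notin\mathrm{Stab}_{W(T)^F}(\theta)$ then splits into two subcases. If $v\in W(T)^F$ but $v\notin\mathrm{Stab}(\theta)$, then the non-trivial character $\chi$ on $K$, together with regularity of $\theta$ — which prevents cancellation of the root-subgroup characters under which $K$ acts on $\mathcal{E}_v/T^F$ — forces the isotypical part to vanish by a standard averaging argument. If $v\notin W(T)^F$, then $L(\hat v\tau)\in\hat v^{-1}F(\hat v)\cdot T$ lies outside $T$; rewriting the equation as $x^{\hat v\tau}L(\hat v\tau)=ux'$, the LHS sits in the Bruhat cell indexed by $v^{-1}F(v)\in W(T)$ while the RHS is in the cell indexed by the relative position of $B$ and $FB$, so $\mathcal{E}_v$ is empty unless these Weyl-group data coincide; when they coincide, a Frobenius-twisted variant of the previous argument delivers the same vanishing.

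Finally, for $v\in\mathrm{Stab}_{W(T)^F}(\theta)$ it remains to show $\dim H^*_c(\mathcal{E}_v/T^F,\mathcal{L}_\theta)=1$. I expect to prove this by a further Bruhat-style stratification of $\mathcal{E}_v/T^F$, using regularity of $\theta$ to kill the non-identity strata and performing a direct dimension count for the remaining identity stratum. The principal obstacle throughout is the non-$F$-stability of $U^{r-b,b}$ for $b\notin\{0,r/2,r\}$, which obstructs a clean comparison with the known $b=r$ case; my plan is to handle this by exploiting the explicit factorisation $F(U^{r-b,b})=(FU)^{r-b}(FU^-)^b$ and analysing each factor separately in the defining equation, this being the main new input needed relative to the previously treated endpoints.
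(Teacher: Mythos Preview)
Your initial reduction via K\"unneth --- observing that $(u',u^-)\in\mathcal{Z}_v^c$ forces $u'u^-=1$ and then splitting off the affine-space factor $\mathcal{Z}_v^c$ --- is correct but unnecessary. The subsequent plan, however, contains real gaps.

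First, a warning sign: you invoke regularity of $\theta$ in several places, yet the lemma holds for \emph{any} $\theta$ (the paper notes this explicitly after stating the three lemmas). Whenever you find yourself reaching for regularity here, the argument is going astray.

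Second, the ``standard averaging argument'' for $v\in W(T)^F\setminus\mathrm{Stab}(\theta)$ is not actually an argument. You have a free action of $T^F\cong(T^F\times T^F)/K$ on the $\tau$-coordinate, but the remaining $K$-action on the quotient is by conjugation on the $(x,x',u)$-variables, and you give no reason why the $\chi$-isotypic part of $H^*_c(\mathcal{E}_v/T^F,\mathcal{L}_\theta)$ should vanish. You would still need to compute that cohomology, which is the original problem shifted one step. The same applies to your final step for $v\in\mathrm{Stab}_{W(T)^F}(\theta)$: ``a further Bruhat-style stratification'' of $\mathcal{E}_v/T^F$ is not a proof, and there is no reason to expect regularity to kill anything there.

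Third, for $v\notin W(T)^F$, the Bruhat-cell comparison you sketch does not work as stated: $x\in F(U^{r-b,b})=(FU)^{r-b}(FU^-)^b$ is not contained in a single Borel, so $x^{\hat v\tau}L(\hat v\tau)$ does not sit in a well-defined cell, nor does $ux'\in U\cdot FU$.

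The paper's approach bypasses all of this. One extends the $T_1^F\times T_1^F$-action on $\Sigma_{v,c}$ to the algebraic group
\[
H=\{(t,t')\in T_1\times T_1\mid tF(t^{-1})=F(\hat v)\,t'F(t')^{-1}\,F(\hat v^{-1})\},
\]
whose identity component $H^\circ$ is a torus. By the standard fixed-point principle for torus actions on $\ell$-adic cohomology, the equivariant Euler characteristic of $\Sigma_{v,c}$ equals that of the $H^\circ$-fixed locus. Both projections $H^\circ\to T_1$ are surjective by Lang--Steinberg, so the fixed locus forces $x=x'=u'=u^-=u=1$ and $F(\hat v\tau)=\hat v\tau$; it is the finite set $(\hat vT)^F$, empty unless $v\in W(T)^F$. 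The $(\theta^{-1},\theta)$-isotypic part of $\overline{\mathbb{Q}}_\ell[(\hat vT)^F]$ is then one-dimensional exactly when $\theta^{\hat v}=\theta$. No regularity, no further stratification, no quotient-and-local-system bookkeeping. I would abandon the route you propose and use this torus argument instead.
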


\begin{lemm}\label{lemm b}
One has $\dim H^*_c(\Sigma_{v,\beta,i}'')_{\theta^{-1},\theta} =0$, where $i=0,\cdots, r-1$ and $\beta\in\Phi^-$.
\end{lemm}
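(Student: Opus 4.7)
The strategy is to construct a free action of a connected one-dimensional algebraic subgroup on $\Sigma_{v,\beta,i}''$, commuting with the $T^F\times T^F$-action, so that the $\theta^{-1}\otimes\theta$-isotypical component of its compactly supported cohomology vanishes; this is the standard approach for eliminating unwanted strata in Deligne--Lusztig type computations.

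Step one is to eliminate $x$. Using the defining relation $xF(u'u^-\hat v\tau)=u'u^-\hat v\tau u x'$, one solves $x=u'u^-\hat v\tau u x' F(u'u^-\hat v\tau)^{-1}$, and this exhibits $\Sigma_{v,\beta,i}''$ as the locally closed subvariety of $\mathcal{Z}_v^{\beta}(i)''\times T\times U\times FU$ cut out by the single condition $x\in F(U^{r-b,b})$. The hypothesis $u'u^-\in U^{r-b,b}$ built into $\mathcal{Z}_v^{\beta}(i)''$ is precisely what aligns the filtrations on the two sides of the eliminated equation and makes this cut-out condition compatible with a translation action on the remaining variables.

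Step two is to find a one-parameter group that translates. Exploiting the asymmetry $r-b<b$ coming from $b>r/2$, I would identify a root $\gamma$ and a level $j$ with $U_\gamma^j\subseteq U^{r-b,b}$---concretely, $\gamma\in\Phi^+$ with $j\geq r-b$, or $\gamma\in\Phi^-$ with $j\geq b$---such that the translation action of $U_\gamma^j$ on $x'$ (or on $u$), together with a compensating modification of the complementary variable, preserves the defining equation, the condition $x\in F(U^{r-b,b})$, and the full stratum data encoded by $\mathcal{Z}_v^{\beta}(i)''$. Normalisation of $U_\gamma^j$ by $T$ then ensures that the resulting $\mathbb{G}_a$-action commutes with $T^F\times T^F$; combining this free action with the nontriviality of the $T$-conjugation character on $U_\gamma^j$ and the regularity of $\theta$ yields the desired vanishing of the $\theta^{-1}\otimes\theta$-isotypical part.

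The main obstacle is pinpointing the correct pair $(\gamma,j)$. In the previously handled cases $b=r$ and $b=r/2$, the subgroup $U^{r-b,b}$ is either the full unipotent radical $U$ (which is Frobenius-stable) or the $F$- and $W$-stable commutative group $U^{\pm}$, and this greatly simplifies the bookkeeping; for general $b\in(r/2,r)$, $U^{r-b,b}=U^{r-b}(U^-)^b$ has neither property, and $(\gamma,j)$ must be chosen so that the translation does not shift the first-nonvanishing-root index $\beta$ nor the level $i$ defining the stratum, and so that the compensating change in the remaining variables stays inside the allowed subgroups $U$ and $FU$. This delicate combinatorial matching, which must be tuned to the decomposition $U^{r-b,b}=U^{r-b}(U^-)^b$ and to the specific position of $(\beta,i)$, is the technical core of the argument.
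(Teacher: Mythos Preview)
Your proposal misidentifies both the difficulty and the method. Lemma~\ref{lemm b} is the \emph{easy} vanishing statement, and the paper proves it (simultaneously with Lemma~\ref{lemm a}) by the classical Deligne--Lusztig torus fixed-point trick, with no use of the regularity of $\theta$ whatsoever. The point you are missing is that the very definition of $\mathcal{Z}_v^{\beta}(i)''$ says $u'u^{-}\in U^{r-b,b}$, so the substitution $xF(u'u^{-})\mapsto x$ is legal and rewrites $\Sigma_{v,\beta,i}''$ as
\[
\widetilde{\Sigma}''_{v,\beta,i}=\{(x,x',u',u^{-},\tau,u)\in F(U^{r-b,b})\times FU\times \mathcal{Z}_v^{\beta}(i)''\times T\times U\mid xF(\hat v\tau)=u'u^{-}\hat v\tau u x'\}.
\]
After this change the $T_1^F\times T_1^F$-action extends to the connected group $H^{\circ}$, where $H=\{(t,t')\in T_1\times T_1\mid tF(t^{-1})=F(\hat v)t'F(t')^{-1}F(\hat v^{-1})\}$, acting by conjugation on the unipotent coordinates. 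Both projections of $H^{\circ}$ to $T_1$ are surjective, so an $H^{\circ}$-fixed point must have $u'=u^{-}=u=1$; but $(u',u^{-})\in\mathcal{Z}_v^{\beta}(i)''$ forces $x^{F(z)}_{F(\beta)}\neq 1$, hence $(u',u^{-})\neq(1,1)$. Thus $(\widetilde{\Sigma}''_{v,\beta,i})^{H^{\circ}}=\emptyset$ and the cohomology vanishes for \emph{every} $\theta$.

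Your plan, by contrast, hunts for a free $\mathbb{G}_a$-action via some root subgroup $U_\gamma^j$ and then invokes regularity of $\theta$. That is the shape of the argument for the genuinely hard Lemma~\ref{lemm c} (the $\Sigma'$ pieces), not for this one; and indeed you concede that pinpointing $(\gamma,j)$ is ``the main obstacle'' without resolving it. Even if such a $(\gamma,j)$ could be found, a free $\mathbb{G}_a$-action commuting with $T^F\times T^F$ does not by itself kill a specific isotypical component; you would still need an argument of the $H_\beta$-type used in the proof of Lemma~\ref{lemm c}, which is considerably more work than the two-line torus argument above. In short: recognise that the condition $u'u^{-}\in U^{r-b,b}$ defining the $''$ stratum is exactly what lets you absorb $F(u'u^{-})$ into $x$ and then run the standard torus argument verbatim.
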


\begin{lemm}\label{lemm c}
One has $\dim H^*_c(\Sigma_{v,\beta,i}')_{\theta^{-1},\theta} =0$, where $i=0,\cdots, r-1$ and $\beta\in\Phi^-$.
\end{lemm}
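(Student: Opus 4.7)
The strategy follows the standard Deligne--Lusztig vanishing principle: we will exhibit a non-trivial subgroup $H$ of $T^F \times T^F$ (possibly constructed after passing to $F^a$-fixed points and norm-descending) acting freely on $\Sigma'_{v,\beta,i}$ such that $\theta^{-1}\otimes\theta$ restricts non-trivially to $H$; this forces $H^*_c(\Sigma'_{v,\beta,i})_{\theta^{-1},\theta}=0$. The candidate $H$ is built from the coroot subgroup $\mathcal{T}^{\beta}$ attached to the root $\beta$ labelling the stratum, embedded into $T^F \times T^F$ through a $\hat v$-twisted (rather than diagonal) map so that it acts compatibly with the defining equation. Regularity of $\theta$ supplies the needed non-triviality through the fact that $\theta$ is non-trivial on $N^{F^a}_F(\mathcal{T}^{\beta})$.

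The crucial input that distinguishes this case from Lemma~\ref{lemm b} is precisely the condition $u'u^- \notin U^{r-b,b}$. Using the root decomposition \eqref{formula: product wrt roots}, the stratum $\mathcal{Z}_v^{\beta}(i)'$ is characterised by two facts: the first non-trivial root component of $F(z) = F((u'u^-)^{\hat v})$ appears at $F(\beta)$, and $u'u^-$ lies outside $U^{r-b,b}$. Conjugation by $t \in \mathcal{T}^{\beta}$ rescales the coordinate $x^{F(z)}_{F(\beta)}$ by a non-trivial multiplicative character; the condition $u'u^- \notin U^{r-b,b}$ guarantees that this rescaling cannot be neutralised by a compensating translation of $x \in F(U^{r-b,b})$ through the equation $x F(u'u^-\hat v \tau) = u'u^- \hat v \tau u x'$, because any such compensation would have to originate inside $F(U^{r-b,b})$ and therefore cannot reach the offending root direction. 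Consequently the induced $H$-action has no fixed points on $\Sigma'_{v,\beta,i}$. This is exactly the mechanism that breaks down in the $\Sigma''$ case, where $u'u^- \in U^{r-b,b}$ allows absorption into $x$, forcing Lemma~\ref{lemm b} to rely on a genuinely different device.

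The hard part, which I expect to be the main obstacle, is the explicit bookkeeping needed to make the above precise: one has to carry out the Bruhat-level root-by-root analysis of the defining equation, keep careful track of how $\hat{v}$-conjugation and the Frobenius twist act on the $T^F$-characters attached to the root coordinates, and pin down the exact character by which $H$ acts on the distinguished coordinate $x^{F(z)}_{F(\beta)}$ so that the regularity hypothesis can be invoked at the last step. Once this is in place, the conclusion follows from the standard quotient-and-cohomology formalism used, e.g., on \cite[p.\,222]{Carter1993FiGrLieTy}, together with the reduction to an equivariant Euler characteristic already provided by Lemma~\ref{lemma: transfer into Euler characteristic}.
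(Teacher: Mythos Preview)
Your proposed mechanism has a genuine gap. A free action of a \emph{finite} subgroup $H\subseteq T^F\times T^F$ on a variety $X$, together with the non-triviality of $(\theta^{-1}\otimes\theta)|_H$, does \emph{not} force $H^*_c(X)_{\theta^{-1},\theta}=0$. Freeness only tells you that the virtual $H$-module $H^*_c(X)$ is a multiple of the regular representation (via Lefschetz), so every isotypic piece has the same Euler characteristic $\chi(X)/|H|$; nothing makes this zero. The ``no fixed points $\Rightarrow$ vanishing'' slogan you are invoking is valid for connected groups (tori), not for finite ones, and your $H$ is finite by construction. Consequently the heuristic about ``rescaling $x^{F(z)}_{F(\beta)}$ cannot be neutralised inside $F(U^{r-b,b})$'' does not, by itself, lead anywhere.

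The paper's argument is of a different nature. After a change of variables it restricts attention to the $(T^{r-1})^F\times 1$-action, which on $\widetilde{\Sigma}'_{v,\beta,i}$ is simply $t\colon \tau\mapsto t^{-1}\tau$ (everything else fixed). The point is not that this action is free, but that it extends to a \emph{family of automorphisms} $h_t$ indexed by a closed subgroup $H_\beta=\{t\in T^{r-1}\mid F(\hat v)^{-1}F(t)t^{-1}F(\hat v)\in\mathcal{T}^{-F(\beta)}\}$ whose identity component is connected and positive-dimensional. The maps $h_t$ are \emph{not} a group action: they also modify $x$ and $x'$ via correction terms built from a section $\Psi^{-F(\beta)}_{F(z)}$ to the commutator map $\xi\mapsto\tau_{\xi,F(z)}$ (this is the content of the technical Lemma preceding the proof and is the real work). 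Connectedness of $(H_\beta)^\circ$ then forces every $h_t$ with $t\in(H_\beta)^\circ$ to act trivially on cohomology; since $(H_\beta)^\circ\cap(T^{r-1})^F$ contains the norm image $N^{F^a}_F$ of a coroot subgroup on which $\theta$ is non-trivial by regularity, the $\theta^{-1}$-isotypic part vanishes. There is also a case split according to whether $i\geq b$ (empty), $i<r-b$, or $r-b\leq i<b$; in the last case one needs a further stratification by an auxiliary root $\beta_1$ before the homotopy can be set up. None of this structure is visible in your outline, and your remark that Lemma~\ref{lemm b} requires a ``genuinely different device'' inverts the actual difficulty: Lemma~\ref{lemm b} is handled together with Lemma~\ref{lemm a} by a direct torus fixed-point count, while Lemma~\ref{lemm c} is the hard one.
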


(As we will see in the below, the first two lemmas are true for any $\theta$, no matter whether $\theta$ is regular; however, our proof of the third lemma relies on the regularity of $\theta$.)

\vspace{2mm} Within these results we can deduce that
\begin{equation*}
\dim H^*_c(\Sigma)_{\theta^{-1},\theta}=\sum_{v\in \mathrm{Stab}_{W(T)^F}(\theta)}1=\#\mathrm{Stab}_{W(T)^F}(\theta),
\end{equation*}
which concludes the theorem. The remaining part of this section devotes to the proofs of these lemmas.

\vspace{2mm} We start with the first two, which are much easier and can be proved simultaneously:

\begin{proof}[Proof of Lemma~\ref{lemm a} and Lemma~\ref{lemm b}]
First, by taking the change of variables $xF(u'u^{-})\mapsto x$, we can rewrite $\Sigma''_{v,\beta,i}$ as 
\begin{equation*}
\widetilde{\Sigma}''_{v,\beta,i} :=\{(x,x',u',u^{-},\tau,u)\in F(U^{r-b,b})\times FU\times \mathcal{Z}_v^{\beta}(i)''\times T\times U\mid xF(\hat{v}\tau)=u'u^-\hat{v}\tau ux'\},
\end{equation*}
on which the $T^F\times T^F$-action does not change. 

\vspace{2mm} Consider $H=\{(t,t')\in T_1\times T_1\mid tF(t^{-1})=F(\hat{v})t'F(t')^{-1}F(\hat{v}^{-1})\}$; this is an algebraic group and it acts on both $\Sigma_{v,c}$ and $\widetilde{\Sigma}''_{v,\beta,i}$ by naturally extending the $T_1^F\times T_1^F$-action (note that $T_1$ is a subgroup of $T$). The identity component $H^{\circ}$ is a torus, thus by basic properties of $\ell$-adic cohomology (see e.g.\ \cite[10.15]{DM1991}) we have
\begin{equation*}
\dim H^*_c(\Sigma_{v,c})_{\theta^{-1},\theta}=\dim H^*_c((\Sigma_{v,c})^{H^{\circ}})_{\theta^{-1},\theta}
\end{equation*}
and
\begin{equation*}
\dim H^*_c(\widetilde{\Sigma}''_{v,\beta,i})_{\theta^{-1},\theta}=\dim H^*_c((\widetilde{\Sigma}''_{v,\beta,i})^{H^{\circ}})_{\theta^{-1},\theta}.
\end{equation*}
The Lang--Steinberg theorem implies that both the first and the second projections of $H^{\circ}$ to $T_1$ are surjective, thus
\begin{equation*}
(\Sigma_{v,c})^{H^{\circ}}=\{ (1,1,1,1,\tau,1)\mid F(\hat{v}\tau)=\hat{v}\tau \}^{H^{\circ}}\quad \text{and}\quad (\widetilde{\Sigma}''_{v,\beta,i})^{H^{\circ}}=\emptyset.
\end{equation*}
So it remains to deal with $(\Sigma_{v,c})^{H^{\circ}}$.

\vspace{2mm} Note that $(\hat{v}T)^F$ is non-empty only if $v\in W(T)^F$; we only need to deal with this non-empty case. As $\{ (1,1,1,1,\tau,1)\mid F(\hat{v}\tau)=\hat{v}\tau \}$ is a finite set, it admits only the trivial action of the connected group $H^{\circ}$, thus 
\begin{equation*}
(\Sigma_{v,c})^{H^{\circ}}=(\hat{v}T)^F,
\end{equation*}
hence $H^*_c(\Sigma_{v,c})=\overline{\mathbb{Q}}_{\ell}[(\hat{v}T)^F]$, on which $T^F\times T^F$ acts by $(t,t')\colon \hat{v}\tau\mapsto \hat{v}(t^{\hat{v}})^{-1}\tau t'$ (note that this is the regular representation of both the left $T^F$ and the right $T^F$ in $T^F\times T^F$). In particular, the irreducible subrepresentations of $H^*_c(\Sigma_{v,c})$ are of the form $(\phi^{\hat{v}})^{-1}\times\phi$, where $\phi$ runs over $\widehat{T^F}$. Therefore, $H^*_c(\Sigma_{v,c})_{\theta^{-1},\theta}$ is non-zero (in which case it is of dimension $1$) if and only if $v\in W(T)^F$ and $\theta^{\hat{v}}=\theta$, that is, if and only if $v\in \mathrm{Stab}_{W(T)^F}(\theta)$.
\end{proof}

\vspace{2mm} Now we turn to Lemma~\ref{lemm c}; its proof is more difficult than those of Lemma~\ref{lemm a} and Lemma~\ref{lemm b}. We divide it into three cases, namely 
\begin{itemize}
\item[(I)]\ $i\geq b$;

\item[(II)]\ $i<r-b$;

\item[(III)]\ $r-b\leq i<b$.
\end{itemize}
We will treat them separately.

\begin{proof}[Proof of Lemma~\ref{lemm c} (case (I) $i\geq b$).]
In this case, $\Sigma'_{v,\beta,i}=\emptyset$ by our construction.
\end{proof}

In order to deal with the other two cases, we need the following slight generalisation of the technical result \cite[Lemma~4.6]{ChenStasinski_2016_algebraisation}:
\begin{lemm}\label{technical lemma}
For $i\in\{0,\dots,b-1\}$, $(u',u^-)\in\mathcal{Z}_v^{\beta}(i)'$, $z:=(u'u^-)^{\hat{v}}$, and $\xi\in U_{-F(\beta)}^{r-i-1}$, one has
\begin{equation*}
[\xi,F(z)]:=\xi F(z)\xi^{-1}F(z)^{-1}=\tau_{\xi,F(z)}\omega_{\xi,F(z)},
\end{equation*}
where $\tau_{\xi,F(z)}\in \mathcal{T}^{-F(\beta)}$ and $\omega_{\xi,F(z)}\in F(U^{-})^{r-1}$ are uniquely determined. Moreover,
\begin{equation*}
U_{-F(\beta)}^{r-i-1}\longrightarrow\mathcal{T}^{-F(\beta)},\qquad \xi\longmapsto\tau_{\xi,F(z)}
\end{equation*}
is a surjective morphism admitting a section $\Psi^{-F(\beta)}_{F(z)}$ such that $\Psi^{-F(\beta)}_{F(z)}(1)=1$ and such that the map
\begin{equation*}
\mathcal{Z}_v^{\beta}(i)'\times \mathcal{T}^{-F(\beta)}\longrightarrow U^{r-i-1}_{-F(\beta)},\qquad ((u',u^{-}),\tau)\longmapsto \Psi_{F(z)}^{-F(\beta)}(\tau)
\end{equation*}
is a morphism.
\end{lemm}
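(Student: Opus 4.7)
The plan is to adapt the argument of \cite[Lemma~4.6]{ChenStasinski_2016_algebraisation} through a careful depth analysis in the iterated Chevalley commutator formula, so that it works uniformly for every $i\in\{0,\ldots,b-1\}$. The starting observation is that $[\xi,F(z)]\in G^{r-1}$: since $z\in G^i$ by the definition of $\mathcal{Z}_v^\beta(i)$, we have $F(z)\in G^i$; combined with $\xi\in G^{r-i-1}$ and the standard commutator bound $[G^a,G^b]\subseteq G^{a+b}$ for the Greenberg filtration, this yields the claim. At the deepest level, $G^{r-1}$ is abelian, canonically identified with the Lie algebra $\mathfrak{g}\otimes_{\mathbb{F}_q}k=\mathfrak{t}\oplus\bigoplus_{\alpha\in\Phi}\mathfrak{u}_\alpha$, so any decomposition $[\xi,F(z)]=\tau\cdot\omega$ with $\tau\in\mathcal{T}^{-F(\beta)}$ and $\omega\in F(U^-)^{r-1}$ is automatically unique once shown to exist.

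To produce the decomposition I would expand $[\xi,F(z)]$ using iterated Chevalley commutator formulas on the ordered product $F(z)=x^{F(z)}_{F(\beta)}\prod_{\beta'>F(\beta)}x^{F(z)}_{\beta'}$, noting that every factor lies at depth $\geq i$ because $F(z)\in G^i$ and the root decomposition of $F(U^-)\cap G^i$ preserves depths factor-by-factor. Each pairwise commutator $[x_{-F(\beta)}(u),x_{\beta'}(v)]$ produces terms $x_{i'(-F(\beta))+j'\beta'}(C_{i'j'}u^{i'}v^{j'})$ for $i',j'\geq 1$, which I would classify as follows. For $i'\geq 2$, the depth is $\geq 2(r-i-1)+i=2r-i-2\geq r$ (using $i\leq b-1\leq r-2$), so the term vanishes modulo $\pi^r$. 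For $i'=1$ with $(j',\beta')=(1,F(\beta))$, the direction is zero and the term lives in the coroot torus $\mathcal{T}^{F(\beta)}=\mathcal{T}^{-F(\beta)}$, yielding $\tau_{\xi,F(z)}$. For $i'=1$ with $(j',\beta')\neq(1,F(\beta))$, the root direction $j'\beta'-F(\beta)$ has length $j'\ell(\beta')-\ell(F(\beta))\leq 0$, because $\beta'\geq F(\beta)$ in the height-refined order forces $\ell(\beta')\leq\ell(F(\beta))<0$; hence such a direction is either not a root (no contribution) or lies in $F(\Phi^-)$, and the term contributes to $F(U^-)^{r-1}$. Summing everything shows $[\xi,F(z)]=\tau_{\xi,F(z)}\cdot\omega_{\xi,F(z)}$ with the asserted properties.

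Finally, the map $\xi\mapsto\tau_{\xi,F(z)}$ arises entirely from the $(1,1,F(\beta))$ contribution, and reads in coroot coordinates as multiplication by a product $Cc$, where $C\in\mathbb{F}_q^\times$ is a Chevalley structure constant and $c\in\mathbb{F}_q^\times$ is the leading coefficient of $x^{F(z)}_{F(\beta)}$ at depth $i$ (nonzero by the definition of $\mathcal{Z}_v^\beta(i)^*$). It is therefore a unit multiple on the associated graded piece and surjects onto $\mathcal{T}^{-F(\beta)}$; the section $\Psi^{-F(\beta)}_{F(z)}$ is its explicit linear inverse, automatically sending $1\mapsto 1$, and depends morphically on $(u',u^-)$ because $c$ is a regular function on $\mathcal{Z}_v^\beta(i)'$. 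The hardest step will be systematically ruling out potential positive-root contributions from higher-order Chevalley terms (which in non-simply-laced types, e.g.\ the $(i',j')=(2,1)$ term in $G_2$, can \emph{a priori} point in $F(\Phi^+)$), and this is precisely what the two bounds $i\leq r-2$ and $\beta'\geq F(\beta)$ together with the length-function computation close.
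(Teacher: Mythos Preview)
Your approach is the paper's: separate the $F(\beta)$-factor of $F(z)$ (which produces the torus piece) from the remaining factors (shown to land in $F(U^-)^{r-1}$ by a height inequality), then invert explicitly to build the section. There is, however, a real gap at the first step. The Chevalley commutator formula you invoke is only valid for a \emph{linearly independent} pair of roots, so it tells you nothing about $[\xi,x^{F(z)}_{F(\beta)}]$; your sentence ``for $(i',j')=(1,1)$ the direction is zero and the term lives in the coroot torus'' is a Lie-algebra heuristic rather than an application of that formula (indeed, for opposite roots the putative $(i',j')$-expansion is not even well posed---what would the ``$(2,2)$ term'' be?). The paper computes this commutator directly from the rank-one relation $p_{-\alpha}(y)p_{\alpha}(x)=p_{\alpha}\bigl(x/(1+axy)\bigr)\,\check\alpha\bigl((1+axy)^{-1}\bigr)\,p_{-\alpha}\bigl(y/(1+axy)\bigr)$ of \cite[XX~2.2]{SGA3}, obtaining $[\xi,x^{F(z)}_{F(\beta)}]=\check\alpha(1+axy)\cdot p_{F(\beta)}(axy^{2})$ with $\alpha=-F(\beta)$; this is what gives both the explicit torus part $\tau_{\xi,F(z)}=\check\alpha(1+axy)$ and the residual factor $p_{F(\beta)}(axy^{2})\in F(U^{-})^{r-1}$ (nontrivial exactly when $i=0$). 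For the remaining factors $\beta'>F(\beta)$ your height bound $j'\ell(\beta')-\ell(F(\beta))\leq 0$ is correct and matches the paper's inductive argument.

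Your section $\Psi$ also needs more than ``take the linear inverse'': in coordinates the map is $x\mapsto 1+axy$ with $y=p_{F(\beta)}^{-1}\bigl(x^{F(z)}_{F(\beta)}\bigr)$, and since $y$ has positive valuation it is not a unit in $\mathcal{O}^{\mathrm{ur}}_r$, so one cannot simply divide by it. The paper resolves this by passing through the shift isomorphism $\mu_i\colon(\pi^i)/(\pi^r)\cong\mathcal{O}^{\mathrm{ur}}_{r-i}$ (where the image of $y$ \emph{is} a unit) together with a chosen section $\mu^i$ of the reduction map; this is what makes $\Psi^{-F(\beta)}_{F(z)}$ an honest morphism varying algebraically in $(u',u^{-})$.
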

\begin{proof}
A similar argument of \cite[Lemma~4.6]{ChenStasinski_2016_algebraisation} works here (actually, it works for any $i\in\{0,...,r-2\}$); we record it for the completeness and for that we will use part of the argument later. Write $F(z)=x^{F(z)}_{F(\beta)}F(z')$ (see the notation in \eqref{formula: product wrt roots}), then 
\begin{equation}\label{temp1}
[\xi,Fz]=\xi \cdot x^{F(z)}_{F(\beta)}\cdot Fz'\cdot\xi^{-1}\cdot (Fz')^{-1}\cdot(x^{F(z)}_{F(\beta)})^{-1}=[\xi,x^{F(z)}_{F(\beta)}]\cdot{^{x^{F(z)}_{F(\beta)}}[\xi,F(z')]}.
\end{equation}
We need to determine $[\xi,x^{F(z)}_{F(\beta)}]$ and ${^{x^{F(z)}_{F(\beta)}}[\xi,Fz']}$.

\vspace{2mm} Following the notation in \cite[XX]{SGA3}, we write $p_{\alpha}\colon (\mathbb{G}_a)_{\mathcal{O}^{\mathrm{ur}}_r}\cong\mathbf{U}_{\alpha}$ for every $\alpha\in\Phi$ (and we use the same notation $p_{\alpha}$ for the isomorphism $\mathcal{F}(\mathbb{G}_a)_{\mathcal{O}^{\mathrm{ur}}_r}\cong U_{\alpha}$ induced by the Greenberg functor). Then there exists $a\in\mathbb{G}_m(\mathcal{O}^{\mathrm{ur}}_r)$ such that, for all $x,y\in \mathbb{G}_a(\mathcal{O}^{\mathrm{ur}}_r)$, we have
\begin{equation}\label{product of opposite root subgroup}
p_{-\alpha}(y)p_{\alpha}(x)=p_{\alpha}(\frac{x}{1+axy})\check{\alpha}((1+axy)^{-1})p_{-\alpha}(\frac{y}{1+axy}).
\end{equation}
(See \cite[XX 2.2]{SGA3}.) Let $x$ and $y$ be such that $p_{-F(\beta)}(x)=\xi$ and $p_{F(\beta)}(y)=x^{Fz}_{F(\beta)}$ (note that in our case $(xy)^2=0$, so $(1+axy)^{-1}=1-axy$). By applying (\ref{product of opposite root subgroup}) to the commutator $[p_{\alpha}(x),p_{-\alpha}(y)]=p_{\alpha}(x)p_{-\alpha}(y)p_{\alpha}(-x)p_{-\alpha}(-y)$ with $\alpha:=-F(\beta)$, we see that
\begin{equation}\label{temp2}
\begin{split}
[\xi,x^{Fz}_{F\beta}]&=p_{\alpha}(x)p_{-\alpha}(y)p_{\alpha}(-x)p_{-\alpha}(-y)\\
&=p_{\alpha}(x)p_{\alpha}(\frac{-x}{1-axy})\check{\alpha}(1+axy)p_{-\alpha}(\frac{y}{1-axy})p_{-\alpha}(-y)\\
&=\check{\alpha}(1+axy)p_{-\alpha}(axy^2).
\end{split}
\end{equation}
Note that, since $\xi\in G^{r-i-1}$ and $x_{F(\beta)}^{F(z)}\in G^i$ (in other words, $\pi^{r-i-1}\mid x$ and $\pi^i\mid y$), we have $p_{F(\beta)}(axy^2)\in U_{F(\beta)}^{r-1}$ (note that $p_{F(\beta)}(axy^2)=1$ unless $i=0$). In the below one shall see that $\check{\alpha}(1+axy)$ is the desired $\tau_{\xi,z}$.

\vspace{2mm} Now turn to $[\xi,F(z')]$; we want to show that $[\xi,F(z')]\in F(U^{-})^{r-1}$. Let us do this by induction on $\#\{\beta'\in F\Phi^{-}\mid x^{Fz'}_{\beta'}\neq1\}$ (a lightly different argument is applied to the corresponding result in the published version of \cite{ChenStasinski_2016_algebraisation}).

\vspace{2mm} If $\#\{\beta'\in F\Phi^{-}\mid x^{Fz'}_{\beta'}\neq1\}=1$, then we have $F(z')=p_{\beta'}(y_0)$ for some $\beta'\in F(\Phi^{-})$ and $y_0\in \mathbb{G}_a(\mathcal{O}^{\mathrm{ur}}_r)$, so by the Chevalley commutator formula (see \cite[3.3.4.1]{Demazure_Summary_of_Thesis}) we get
\begin{equation*}
[\xi,Fz']=\prod_{j,j'\geq1,\  j\beta'+j'(-F(\beta))\in\Phi}p_{j\beta'+j'(-F(\beta))}(a_{j,j'}y_0^jx^{j'})\in\prod_{j,j'\geq1,\  j\beta'+j'(-F(\beta))\in\Phi}(U_{j\beta'+j'(-F\beta)})^{r-1}
\end{equation*}
for some $a_{j,j'}\in\mathcal{O}_r^{\mathrm{ur}}$. In this formula, if $j\beta'+j'(-F\beta)\in\Phi^{+}$ for some $j,j'$ and if $y_0^jx^{j'}$ is non-zero, then the non-zero coefficients of the simple roots in $\beta'+(-F\beta)$ are all greater than zero and there exists at least one non-zero coefficient (recall that $\xi\in G^{r-i-1}$ and $z'\in G^i$); this implies that $\mathrm{ht}(F(\beta))>\mathrm{ht}(\beta')$, which contradicts our assumption on $z$, so $[\xi,F(z')]\in F(U^-)^{r-1}$ in this case. 

\vspace{2mm} Now, suppose that $[\xi,Fz']\in F(U^-)^{r-1}$ for all $\#\{\beta'\in F\Phi^{-}\mid x^{Fz'}_{\beta}\neq1\}\leq N$. Then for $\#\{\beta'\in F\Phi^{-}\mid x^{Fz'}_{\beta'}\neq1\}=N+1$, take a decomposition $Fz'=\prod_{\beta'\in F\Phi^{-}}x^{Fz'}_{\beta'}=z'_1z'_2$ in a way such that both $[\xi,z'_1]$ and $[\xi,z'_2]$ are in $F(U^-)^{r-1}$; note that 
\begin{equation*}
[\xi,Fz']=[\xi,z'_1]\cdot{^{z'_1}[\xi,z'_2]}.
\end{equation*}
As $z'_1\in FU^{-}$, we have ${^{z'_1}[\xi,z'_2]}\in F(U^{-})^{r-1}$, thus $[\xi,Fz']\in F(U^-)^{r-1}$ also for $\#\{\beta'\in F\Phi^{-}\mid x^{Fz'}_{\beta'}\neq1\}=N+1$. So by the induction principle we always have $[\xi,Fz']\in F(U^-)^{r-1}$.

\vspace{2mm} By \eqref{temp1} and \eqref{temp2} we get
\begin{equation*}
[\xi,Fz]=[\xi,x^{Fz}_{F(\beta)}]\cdot{^{x^{Fz}_{F\beta}}[\xi,Fz']}=\check{\alpha}(1+axy)\cdot p_{F\beta}(axy^2)\cdot{^{x^{Fz}_{F\beta}}[\xi,Fz']}.
\end{equation*}
Now put
\begin{equation*}
\tau_{\xi,Fz} =\check{\alpha}(1+axy)
\end{equation*}
and 
\begin{equation*}
\omega_{\xi,Fz}=p_{F\beta}(axy^2)\cdot{^{x^{Fz}_{F\beta}}[\xi,Fz']}.
\end{equation*}
From the above we see that $\tau_{\xi,Fz}\in\mathcal{T}^{-F\beta}$ and $\omega_{\xi,Fz}\in F(U^-)^{r-1}$ (as $[\xi,Fz']\in F(U^{-})^{r-1}$). The elements $\tau_{\xi,Fz}$ and $\omega_{\xi,Fz}$ are uniquely determined because of the Iwahori decomposition.

\vspace{2mm} Now, as $\tau_{\xi,F(z)}$ is defined to be $\check{\alpha}(1+ap^{-1}_{-F(\beta)}(\xi)p^{-1}_{F\beta}(x^{Fz}_{F\beta}))$, the map $\xi\mapsto\tau_{\xi,Fz}$, whose target is a connected $1$-dimensional algebraic group, is a surjective algebraic group morphism (note that $Fz\mapsto x_{F\beta}^{Fz}$ is a projection, hence a morphism). We define $\Psi^{-F\beta}_{Fz}$ in the following way: The isomorphism of additive groups 
\begin{equation*}
(\pi^i)\cong \mathcal{O}^{\mathrm{ur}}_{r-i},\quad \pi^ia+(\pi^r)\longmapsto a+(\pi^{r-i})
\end{equation*}
induces an isomorphism of affine spaces (by the Greenberg functor)
\begin{equation*}
\mu_i\colon (\mathcal{F}(\mathbb{G}_{a})_{\mathcal{O}^{\mathrm{ur}}_r})^i\longrightarrow(\mathcal{F}(\mathbb{G}_{a})_{\mathcal{O}^{\mathrm{ur}}_r})_{r-i}.
\end{equation*}
Note that this isomorphism depends on the choice of $\pi$. Meanwhile, let
\begin{equation*}
\mu^i\colon (\mathcal{F}(\mathbb{G}_{a})_{\mathcal{O}^{\mathrm{ur}}_r})_{r-i}\cong\mathcal{F}(\mathbb{G}_{a})_{\mathcal{O}^{\mathrm{ur}}_r}/(\mathcal{F}(\mathbb{G}_{a})_{\mathcal{O}^{\mathrm{ur}}_r})^{r-i}\longrightarrow\mathcal{F}(\mathbb{G}_{a})_{\mathcal{O}^{\mathrm{ur}}_r}
\end{equation*}
be a section morphism to the quotient morphism such that $\mu^i(0)=0$ ($\mu^i$ exists because $\mathcal{F}(\mathbb{G}_{a})_{\mathcal{O}^{\mathrm{ur}}_r}$ is an affine space). For $\tau\in\mathcal{T}^{-F\beta}$ we put
\begin{equation*}
\Psi_{Fz}^{-F\beta}(\tau):=p_{-F\beta}\left(a^{-1}\cdot\mu^{i}\left(  \mu_i\left( \check{\alpha}^{-1}(\tau)-1  \right)\cdot \mu_i\left(  p_{F\beta}^{-1}(x^{Fz}_{F\beta}) \right)^{-1}   \right) \right).
\end{equation*}
Here $\check{\alpha}^{-1}$ is defined on $\mathcal{T}^{-F\beta}=(\mathcal{F}\mathbf{T}^{-F\beta})^{r-1}\cong(\mathcal{F}(\mathbb{G}_{m})_{\mathcal{O}^{\mathrm{ur}}_r})^{r-1}$ as the inverse to $\check{\alpha}$, and we view $\check{\alpha}^{-1}(\tau)$ as an element in $\mathcal{F}(\mathbb{G}_{a})_{\mathcal{O}^{\mathrm{ur}}_r}$ by the natural open immersion $(\mathbb{G}_{m})_{\mathcal{O}^{\mathrm{ur}}_r}\rightarrow(\mathbb{G}_{a})_{\mathcal{O}^{\mathrm{ur}}_r}$, so the minus operation $ \check{\alpha}^{-1}(\tau)-1$ is well-defined. On the other hand, by our assumption on $z$, $\mu_i\left( p_{F\beta}^{-1}(x^{Fz}_{F\beta}) \right)$ is an element in $\mathcal{F}(\mathbb{G}_{m})_{\mathcal{O}^{\mathrm{ur}}_{r-i}}$, so its multiplicative inverse exists. Moreover, the product operation ``$\cdot$'' is by viewing $(\mathbb{G}_{a})_{\mathcal{O}^{\mathrm{ur}}_r}$ (resp.\ $\mathcal{F}(\mathbb{G}_{a})_{\mathcal{O}^{\mathrm{ur}}_r}$) as a ring scheme (resp.\ $k$-ring variety). Thus $\Psi_{F(z)}^{-F(\beta)}$ is well-defined; we need to check that it is a section morphism.

\vspace{2mm} By the definition of $\mu_i$ and $\mu^i$, for $\tau\in\mathcal{T}^{-F\beta}(k)$ we have
\begin{equation*}
\begin{split}
\tau_{\Psi_{Fz}^{-F\beta}(\tau),Fz}&=\check{\alpha}\left(1+ap_{F\beta}^{-1}(\Psi_{Fz}^{F\beta}(\tau))p^{-1}_{F\beta}(x_{F\beta}^{Fz})\right)\\
&=\check{\alpha}\left(1+\mu^{i}\left(  \mu_i\left(  \check{\alpha}^{-1}(\tau)-1  \right)\cdot \mu_i\left(  p_{F\beta}^{-1}(x^{Fz}_{F\beta}) \right)^{-1}  \right) \cdot p^{-1}_{F\beta}(x_{\beta}^{Fz}) \right)\\
&=\check{\alpha}\left(1+\pi^i\cdot\mu^i\mu_i(\check{\alpha}^{-1}(\tau)-1)\right)=\tau.
\end{split}
\end{equation*}
(For the third line, note that $\check{\alpha}^{-1}(\tau)$ is of the form $1+s\pi^{r-1}$ for some $s\in\mathcal{O}^{\mathrm{ur}}_r$, as an element in $\mathbb{G}_m(\mathcal{O}^{\mathrm{ur}}_r)$.) Thus $\tau\mapsto\Psi_{Fz}^{-F\beta}(\tau)\mapsto\tau_{\Psi_{Fz}^{-F\beta}(\tau),Fz}$ is the identity map on the $k$-points $\mathcal{T}^{-F\beta}(k)$ of the $1$-dimensional affine space $\mathcal{T}^{-F\beta}\cong\mathbb{A}^1_k$, hence it is the identity morphism, so $\Psi_{Fz}^{-F\beta}$ is a section to $\xi\mapsto\tau_{\xi,F(z)}$; the other assertions follow from the definition of $\Psi_{Fz}^{-F\beta}$.
\end{proof}

Taking the changes of variables $\hat{v}\tau\hat{v}^{-1}\mapsto\tau$, $\tau^{-1}u^{-}\tau\mapsto u^{-}$, and then $\tau^{-1}u'\tau\mapsto u'$ (in this order), we can rewrite $\Sigma'_{v,\beta,i}$ as
\begin{equation*}
\widetilde{\Sigma}'_{v,\beta,i}:=\{(x,x',u',u^{-},\tau,u)\in FU^{r-b,b}\times FU\times \mathcal{Z}_v^{\beta}(i)'\times T\times U\mid xF(\tau u'u^{-}\hat{v})=\tau u'u^{-}\hat{v}ux'\},
\end{equation*}
on which $(t,t')\in T^F\times T^F$ acts by sending $(x,x',u',u^{-},\tau,u)$ to
\begin{equation*} 
(t^{-1}xt,{t'}^{-1}x't',({^{\hat{v}}t'})^{-1}u'(^{\hat{v}}t'),({^{\hat{v}}t'})^{-1}u^-(^{\hat{v}}t'),t^{-1}\tau (^{\hat{v}}t'),{t'}^{-1}ut').
\end{equation*}
To show $\dim H^*_c(\widetilde{\Sigma}'_{v,\beta,i})_{\theta^{-1},\theta}=0$ for $i<b$, it suffices to show 
\begin{equation*}
\dim H^*_c(\widetilde{\Sigma}'_{v,\beta,i})_{\theta^{-1}|_{(T^{r-1})^F}}=0
\end{equation*}
for $i<b$, where the subscript $(T^{r-1})^F$ denotes the subgroup $(T^{r-1})^F\times 1\subseteq T^F\times T^F$. Note that $(T^{r-1})^F$ acts on $\widetilde{\Sigma}'_{v,\beta,i}$ by 
\begin{equation*}
t\colon (x,x',u',u^{-},\tau,u)\mapsto (x,x',u',u^{-},t^{-1}\tau,u).
\end{equation*}

\begin{proof}[Proof of Lemma~\ref{lemm c} (case (II) $i<r-b$).]
In this case, note that $\mathcal{Z}_{v}^{\beta}(i)'\subseteq\mathcal{Z}_v$ is just the closed subvariety consisting of the pairs $(u',u^{-})$ satisfying that: 
\begin{itemize}
\item[(1)] $z:=(u'u^{-})^{\hat{v}}\in (U^-)^i\backslash (U^-)^{i+1}$;

\item[(2)] $x_{F(\beta)}^{F(z)}\neq1$;

\item[(3)] $x_{\beta'}^{F(z)}=1$ for all $\beta'<F(\beta)$.
\end{itemize}

\vspace{2mm} Consider
\begin{equation*}
H_{\beta}:=\{t\in T^{r-1}\mid F(\hat{v})^{-1}F(t)t^{-1}F(\hat{v})\in \mathcal{T}^{-F(\beta)}\};
\end{equation*}
this is a closed subgroup of $T^{r-1}$, and it contains $(T^{r-1})^F$.

\vspace{2mm} For any $t\in H_{\beta}$, we have a morphism $g_t\colon FU\rightarrow FU$ given by
\begin{equation*}
g_t\colon x'\mapsto x'\cdot\Psi_{F(z)}^{-F(\beta)}\left(F(\hat{v})^{-1}F(t^{-1})tF(\hat{v})\right)^{-1},
\end{equation*}
where the parameter is $z:=\hat{v}^{-1}u'u^{-}\hat{v}$ with $(u',u^{-})\in \mathcal{Z}_{v}^{\beta}(i)'$ (here $\Psi_{F(z)}^{-F(\beta)}$ is the morphism in Lemma~\ref{technical lemma}). By Lemma~\ref{technical lemma}, if $F(t)=t$, then $g_t=\mathrm{Id}$.

\vspace{2mm} Meanwhile, for any $t\in H_{\beta}$, we have a morphism $f_t\colon FU^{r-b,b}\rightarrow FU^{r-b,b}$ given by
\begin{equation*}
f_t\colon x\mapsto x\cdot {^{F(\tau)}\left( t^{-1} \cdot {^{F(\hat{v}z)}\left( x'^{-1}g_t(x')\right)}  F(t) \right)},
\end{equation*}
where the parameters are $x'\in FU$, $\tau\in T$, and $z:=\hat{v}^{-1}u'u^{-}\hat{v}$ with $(u',u^{-})\in\mathcal{Z}_v^{\beta}(i)'$. We need to check that this is well-defined, i.e.\ to check that the right hand side is in $FU^{r-b,b}$: By Lemma~\ref{technical lemma} we have
\begin{equation*}
F(z) x'^{-1}g_t(x') F(z^{-1})=\Psi_{F(z)}^{-F(\beta)}\left(F(\hat{v})^{-1}F(t^{-1})tF(\hat{v})\right)^{-1}\cdot F(\hat{v})^{-1}F(t^{-1})tF(\hat{v})\cdot\omega
\end{equation*}
for some $\omega\in{U^{r-1}(U^-)^{r-1}}$, so
\begin{equation*}
(x^{-1}f_t(x))^{F(\tau)}=({^{F(\hat{v})}\Psi})^t   \cdot ({^{F(\hat{v})}\omega})^{F(t)} \in\prod_{\alpha\in\Phi}{U^{r-i-1}_{\alpha}}\subseteq \prod_{\alpha\in\Phi} (U_{\alpha})^b\subseteq FU^{r-b,b},
\end{equation*}
where $\Psi:=\Psi_{F(z)}^{-F(\beta)}(F(\hat{v})^{-1}F(t^{-1})tF(\hat{v}))^{-1}$. Thus $x^{-1}f_t(x)\in FU^{r-b,b}$, hence $f_t$ is well-defined. Like $g_t$, if $F(t)=t$, then $f_t=\mathrm{Id}$.

\vspace{2mm} Now for any $t\in H_{\beta}$, by combining the above constructions we get the following automorphism of $\widetilde{\Sigma}'_{v,\beta,i}$:
\begin{equation*}
h_t\colon (x,x',u',u^{-},\tau,u)\mapsto (f_t(x),g_t(x'),u',u^{-},t^{-1}\tau,u),
\end{equation*}
where the parameter is $z:=\hat{v}^{-1}u'u^{-}\hat{v}$. We need to check that this is well-defined, i.e.\ to check that the image satisfies the defining equation of $\widetilde{\Sigma}'_{v,\beta,i}$, that is, satisfies 
\begin{equation*}
f_t(x)F(t^{-1}\tau u'u^{-}\hat{v})=t^{-1}\tau u'u^{-}\hat{v}ug_t(x').
\end{equation*}
Indeed, by expanding the definition of $f_t$ we see that: 
\begin{equation*}
\begin{split}
f_t(x)F(t^{-1}\tau u'u^{-}\hat{v})&=x\cdot {^{F(\tau)}\left( t^{-1} \cdot {^{F(\hat{v}z)}\left( x'^{-1}g_t(x')\right)} F(t) \right)}\cdot F(t^{-1}\tau u'u^{-}\hat{v})\\
&=t^{-1}xF(\tau u'u^{-}\hat{v})x'^{-1}g_t(x')\\
&=t^{-1}\tau u'u^{-}\hat{v}ug_t(x').
\end{split}
\end{equation*}
(For the second line, note that $t\in T^{r-1}$ commutes with the elements in $G^{r-1}$; for the third line, use the property $xF(\tau u'u^{-}\hat{v})=\tau u'u^{-}\hat{v}ux'$.) So $h_t$ is well-defined. Clearly, if $F(t)=t$, then $h_t$ coincides with the $(T^{r-1})^F$-action, so (see the discussions in \cite[p.~136]{DL1976} or \cite[Lemma~4.3.4]{ZheChen_PhDthesis}) the induced endomorphism of $h_t$ on $H^*_c(\widetilde{\Sigma}'_{v,\beta,i})$ is the identity map for any $t$ in the identity component $(H_{\beta})^{\circ}$ of $H_{\beta}$.

\vspace{2mm} Let $a\in\mathbb{Z}_{>0}$ be such that $F^a(F(\hat{v})\mathcal{T}^{-F(\beta)}F(\hat{v})^{-1})=F(\hat{v})\mathcal{T}^{-F(\beta)}F(\hat{v})^{-1}$. By continuity, the images of the norm map $N^{F^a}_F(t)=t\cdot F(t)\cdots F^{a-1}(t)$ on $F(\hat{v})\mathcal{T}^{-F(\beta)}F(\hat{v})^{-1}$ form a connected subgroup of $H_{\beta}$, hence are contained in $(H_{\beta})^{\circ}$. Therefore $N^{F^a}_F((F(\hat{v})\mathcal{T}^{-F(\beta)}F(\hat{v})^{-1})^{F^a})\subseteq (T^{r-1})^F\cap (H_{\beta})^{\circ}$. Thus, the regularity of $\theta$ implies
\begin{equation*}
H^*_c(\widetilde{\Sigma}'_{v,\beta,i})_{\theta^{-1}\big|_{ N^{F^a}_F\left(\left(F(\hat{v})\mathcal{T}^{-F(\beta)}F(\hat{v})^{-1}\right)^{F^a}\right)}}=0.
\end{equation*}
In particular, $H^*_c(\widetilde{\Sigma}'_{v,\beta,i})_{\theta^{-1}|_{(T^{r-1})^F}}=0$.
\end{proof}

We use a modification of the argument of the case (II) to deal with the case (III).

\begin{proof}[Proof of Lemma~\ref{lemm c} (case (III) $r-b\leq i<b$).]
In this case, by our construction, $\widetilde{\Sigma}'_{v,\beta,i}$ (or more precisely, $\mathcal{Z}_{v}^{\beta}(i)'$) is non-empty only if there is a $\beta_1\in\Phi^-$ such that $v(\beta_1)\in\Phi^{-}$; we only need to deal with this non-empty case. Denote by $\mathcal{B}_v$ the set consisting of the elements $\beta_1 \in \Phi^-$ satisfying: $F(\beta_1)\geq F(\beta)$ and $v(\beta_1)\in\Phi^-$. Then by the assumption that $r-b\leq i<b$ we have a stratification by locally closed subvarieties
\begin{equation*}
\mathcal{Z}_{v}^{\beta}(i)'=\sqcup_{\beta_1\in\mathcal{B}_v}\mathcal{Z}_{v}^{\beta,\beta_1}(i),
\end{equation*}
where $\mathcal{Z}_{v}^{\beta,\beta_1}(i)\subseteq \mathcal{Z}_{v}^{\beta}(i)'$ is the subvariety consisting of the pairs $(u',u^-)\in \mathcal{Z}_{v}^{\beta}(i)'$ satisfying the following properties: (write $z:=(u'u^{-})^{\hat{v}}$ for $(u',u^-)\in\mathcal{Z}_{v}^{\beta}(i)'$)
\begin{itemize}
\item[(1)] $x^{F(z)}_{F(v(\beta_1))}\notin F(U^-)^{b}$;

\item[(2)] $x^{F(z)}_{F(v(\beta_2))}\in F(U^-)^{b}$ for all $\beta_2\in\mathcal{B}_v$ satisfying that $F(\beta_2)<F(\beta_1)$.
\end{itemize}
This partition of $\mathcal{Z}_{v}^{\beta}(i)'$ naturally induces a partition of $\widetilde{\Sigma}'_{v,\beta,i}$:
\begin{equation*}
\widetilde{\Sigma}'_{v,\beta,i}=\sqcup_{\beta_1\in\mathcal{B}_v}{\Sigma}_{v,\beta,\beta_1,i},
\end{equation*}
where
\begin{equation*}
{\Sigma}_{v,\beta,\beta_1,i}:=\{(x,x',u',u^{-},\tau,u)\in FU^{r-b,b}\times FU\times \mathcal{Z}_{v}^{\beta,\beta_1}(i)\times T\times U\mid xF(\tau u'u^{-}\hat{v})=\tau u'u^{-}\hat{v}ux'\};
\end{equation*}
clearly each ${\Sigma}_{v,\beta,\beta_1,i}$ inherits the $(T^{r-1})^F$-action.

\vspace{2mm} Consider
\begin{equation*}
H_{\beta_1}:=\{t\in T^{r-1}\mid F(\hat{v})^{-1}F(t)t^{-1}F(\hat{v})\in \mathcal{T}^{-F(\beta_1)}\},
\end{equation*}
which is a closed subgroup of $T^{r-1}$ containing $(T^{r-1})^F$. In the following, for $(u',u^-)\in \mathcal{Z}_{v}^{\beta,\beta_1}(i)$, we put $z:=(u'u^-)^{\hat{v}}$ and we write $F(z)=F(z_0)\cdot F(z_1)$, where 
$$F(z_0):=\prod_{\alpha\in F(\Phi^{-}),\ \beta'<F(\beta_1)}x^{F(z)}_{\beta'}$$
and
$$F(z_1):=\prod_{\alpha\in F(\Phi^{-}),\ \beta'\geq F(\beta_1)}x^{F(z)}_{\beta'};$$
here the products are taken in the order as in \eqref{formula: product wrt roots}.

\vspace{2mm} For $t\in H_{\beta_1}$, we have a morphism $g_t\colon FU\rightarrow FU$ defined by
\begin{equation*}
g_t\colon x'\mapsto x'\cdot\Psi_{F(z_1)}^{-F(\beta_1)}\left(F(\hat{v})^{-1}F(t^{-1})tF(\hat{v})\right)^{-1},
\end{equation*}
where the parameter is $z:=\hat{v}^{-1}u'u^{-}\hat{v}$ with $(u',u^{-})\in \mathcal{Z}_{v}^{\beta,\beta_1}(i)$; this $\Psi_{F(z_1)}^{-F(\beta_1)}$ is the morphism in Lemma~\ref{technical lemma}. Note that, if $F(t)=t$, then $g_t=\mathrm{Id}$.

\vspace{2mm} Meanwhile, for $t\in H_{\beta_1}$, we have a morphism $f_t\colon FU^{r-b,b}\rightarrow FU^{r-b,b}$ defined by
\begin{equation*}
f_t\colon x\mapsto x\cdot {^{F(\tau)}\left( t^{-1} \cdot {^{F(\hat{v}z)}\left( x'^{-1}g_t(x')\right)}\cdot F(t) \right)},
\end{equation*}
where the parameters are $x'\in FU$, $\tau\in T$, and $z:=\hat{v}^{-1}u'u^{-}\hat{v}$ with $(u',u^{-})\in \mathcal{Z}_{v}^{\beta,\beta_1}(i)$. To see this is well-defined, we need to check that the right hand side is in $FU^{r-b,b}$: By Lemma~\ref{technical lemma} we have
\begin{equation*}
F(z) x'^{-1}g_t(x') F(z^{-1})={^{F(z_0)}(\Psi_{F(z_1)}^{-F(\beta_1)}\left(F(\hat{v})^{-1}F(t^{-1})tF(\hat{v})\right)^{-1}\cdot F(\hat{v})^{-1}F(t^{-1})tF(\hat{v})\cdot\omega)}
\end{equation*}
for some $\omega\in{U^{r-1}(U^-)^{r-1}}$, so (recall that $i>0$)
\begin{equation*}
(x^{-1}f_t(x))^{F(\tau)}=({^{F(\hat{v}z_0)}\Psi})^t   \cdot ({^{F(\hat{v})}\omega})^{F(t)}
\end{equation*}
where $\Psi:=\Psi_{F(z_1)}^{-F(\beta_1)}(F(\hat{v})^{-1}F(t^{-1})tF(\hat{v}))^{-1}$; it remains to show that ${^{F(\hat{v}z_0)}\Psi}\in FU^{r-b,b}$. By the Chevalley commutator formula (see \cite[3.3.4.1]{Demazure_Summary_of_Thesis} or the relevant part in the proof of Lemma~\ref{technical lemma}) we have ${^{F(z_0)}\Psi}=\Psi\cdot\omega_1$ for some $\omega_1\in U^{r-1}(U^-)^{r-1}$. Now, as $b>0$, it suffices to show that ${^{F(\hat{v})}\Psi}\in FU^{r-b,b}$, which follows immediately from our assumption that $\beta_1\in\mathcal{B}_v$. Thus $f_t$ is well-defined. And like $g_t$, if $F(t)=t$, then $f_t=\mathrm{Id}$.

\vspace{2mm} Now for any $t\in H_{\beta_1}$, by combining the above constructions we get the following automorphism of ${\Sigma}_{v,\beta,\beta_1,i}$:
\begin{equation*}
h_t\colon (x,x',u',u^{-},\tau,u)\mapsto (f_t(x),g_t(x'),u',u^{-},t^{-1}\tau,u),
\end{equation*}
where the parameter is $z:=\hat{v}^{-1}u'u^{-}\hat{v}$. As in the case (II), we can check that this is well-defined by expanding the definition of $f_t$:
\begin{equation*}
\begin{split}
f_t(x)F(t^{-1}\tau u'u^{-}\hat{v})&=x\cdot {^{F(\tau)}\left( t^{-1} \cdot {^{F(\hat{v}z)}\left( x'^{-1}g_t(x')\right)} F(t) \right)}\cdot F(t^{-1}\tau u'u^{-}\hat{v})\\
&=t^{-1}xF(\tau u'u^{-}\hat{v})x'^{-1}g_t(x')\\
&=t^{-1}\tau u'u^{-}\hat{v}ug_t(x').
\end{split}
\end{equation*}
So $h_t$ is well-defined. Similarly, if $F(t)=t$, then $h_t$ coincides with the $(T^{r-1})^F$-action, so the induced endomorphism of $h_t$ on $H^*_c({\Sigma}_{v,\beta,\beta_1,i})$ is the identity map for any $t$ in the identity component $(H_{\beta_1})^{\circ}$ of $H_{\beta_1}$.

\vspace{2mm} Let $a\in\mathbb{Z}_{>0}$ be such that $F^a(F(\hat{v})\mathcal{T}^{-F(\beta_1)}F(\hat{v})^{-1})=F(\hat{v})\mathcal{T}^{-F(\beta_1)}F(\hat{v})^{-1}$. Again, the images of the norm map $N^{F^a}_F(t)=t\cdot F(t)\cdots F^{a-1}(t)$ on $F(\hat{v})\mathcal{T}^{-F(\beta_1)}F(\hat{v})^{-1}$ form a connected subgroup of $H_{\beta_1}$, hence are contained in $(H_{\beta_1})^{\circ}$. Thus $N^{F^a}_F((F(\hat{v})\mathcal{T}^{-F(\beta_1)}F(\hat{v})^{-1})^{F^a})\subseteq (T^{r-1})^F\cap (H_{\beta_1})^{\circ}$. Finally, the regularity of $\theta$ implies that
\begin{equation*}
H^*_c({\Sigma}_{v,\beta,\beta_1,i})_{\theta^{-1}\big|_{ N^{F^a}_F\left(\left(F(\hat{v})\mathcal{T}^{-F(\beta_1)}F(\hat{v})^{-1}\right)^{F^a}\right)}}=0.
\end{equation*}
Therefore $H^*_c({\Sigma}_{v,\beta,\beta_1,i})_{\theta^{-1}|_{(T^{r-1})^F}}=0$ for all $\beta_1\in\mathcal{B}_v$, so $H^*_c(\widetilde{\Sigma}'_{v,\beta,i})_{\theta^{-1}|_{(T^{r-1})^F}}=0$.
\end{proof}

This completes the proof of the theorem.

\bibliographystyle{alpha}
\bibliography{zchenrefs}

\end{document}